\theoremstyle{plain}
\newtheorem{Thm}{Theorem}
\newtheorem{Cor}[Thm]{Corollary}
\newtheorem{Lem}[Thm]{Lemma}
\newtheorem{Prob}[Thm]{Problem}
\newtheorem{Def}[Thm]{Definition}
\newtheorem{Q}[Thm]{Question}
\theoremstyle{definition}
\newtheorem{Remark}[Thm]{Remark}
\newtheorem{Ex}[Thm]{Example}
\newtheorem*{Add1}{Addendum to Theorem \ref{T:FirstVarQuoKappa}}
\renewcommand{\Re}{\operatorname{Re}}
\renewcommand{\Im}{\operatorname{Im}}
\renewcommand{\bar}{\overline}
\renewcommand{\tilde}{\widetilde}
\newcommand{\intl}{\int\limits}
\newcommand{\C}{\mathbb C}
\newcommand{\R}{\mathbb R}
\newcommand{\CP}{\mathbb{CP}}
\newcommand{\Z}{\mathbb Z}
\newcommand{\dee}{\partial}
\newcommand{\deebar}{\overline\partial}
\newcommand{\st}{\,:\,}
\newcommand{\meas}{\mathcal F}
\newcommand{\bmeas}{\mathcal B}
\newcommand{\vol}{\mathcal V}
\DeclareMathOperator{\Vol}{Vol}
\newcommand{\quo}{\mathcal Q}
\newcommand{\opa}{{\mathcal L}_1}
\newcommand{\opb}{{\mathcal L}_2}
\newcommand{\eps}{\varepsilon}
\newcommand{\bndry}{b}
\newcommand{\w}{\wedge}
\numberwithin{equation}{section}
\begin{document}
\title[Variational problems for Fefferman's measure]
{Remarks on variational problems for Fefferman's measure}
\author[David E. Barrett]{David E. Barrett}
\address{Department of Mathematics\\University of Michigan
\\Ann Arbor, MI  48109-1043  USA }

\author[Christopher Hammond]{Christopher Hammond}
\address{Department of Mathematics \\
Texas A\&M University \\
 College Station, TX 77843-3368  USA }

\thanks{{\em 2010 Mathematics Subject Classification:} 32V40, 49J4}

\thanks{The first author was supported in part by NSF grant number DMS-0901205.  The second author was supported in part by RTG grant number DMS-060219.}


\date{\today}

\begin{abstract}  
We investigate the Plateau and isoperimetric problems associated to Fefferman's measure for strongly pseudoconvex real hypersurfaces in $\C^n$ (focusing on the case $n=2$), showing in particular that the isoperimetric problem shares features of both the euclidean isoperimetric problem and the corresponding problem in Blaschke's equiaffine geometry in which the key inequalities are reversed.  

The problems are invariant under constant-Jacobian biholomorphism, but we also introduce  a non-trivial  modified isoperimetric quantity invariant under general biholomorphism.
\end{abstract}

\maketitle

\section{Introduction}\label{S:Intro}

\subsection{Fefferman's measure}

Let $Z\subset \C^n$ be a strongly pseudoconvex hypersurface with defining function $\rho$.  Fefferman's measure of $Z$ is defined by
\begin{equation}\label{E:FeffMeasInt}
\meas(Z) = \intl_Z \sigma_Z,
\end{equation}
where $\sigma_Z$ is the positive $(2n-1)$-form on $Z$ uniquely determined by
\begin{equation}\label{E:FeffMeasForm}
\sigma_Z\w d\rho =  2^{ 2n/(n+1)} \,M(\rho)^{1/(n+1)}
\omega_{\C^n},
\end{equation}
where $\omega_{\C^n}$ is the euclidean volume form and
\begin{equation}\label{E:MADef}
M(\rho)=- \det
\begin{pmatrix}
0 & \rho_{z_j}\\
\rho_{z_{\bar k}} & \rho_{z_{j}\bar z_{  k}}
\end{pmatrix},
\end{equation}
with the subscripts denoting differentiation.  (See \S \ref{SS:Fef-def-note} below regarding the choice of dimensional constant.)

The form $\sigma_Z$ does not depend on the choice of defining function $\rho$, and $\sigma_Z$ obeys the transformation law
\begin{equation*}\label{E:ftl}
H^* \sigma_{H(Z)} =\left| \det H' \right|^{2n/(n+1)} \sigma_Z 
\end{equation*}
when $H$ is a biholomorphic map defined on a neighborhood of  $Z$ (or  a CR diffeomorphism defined on $Z$).  In particular we have 
\begin{equation}\label{E:ConstJacTrans}
\meas(H(Z)) = \left| \det H' \right|^{2n/(n+1)} \meas(Z)
\end{equation}
when $\det H'$ is constant; thus Fefferman measure is preserved by volume-preserving biholomorphic maps.
(See [Fef1, p. 259], [Fef2] and [Bar1] for details and additional information.)

\subsection{The isoperimetric quotient} \label{SS:IsoQuo} 

Assume further that $Z$ is the compact boundary of a bounded domain.  Denote by $\vol(Z)$ the volume enclosed by $Z$ and define the isoperimetric quotient of $Z$ by
\begin{align*}
\quo(Z) = \frac{\meas(Z)^{(n+1)/n}}{\vol(Z)}.
\end{align*}

It follows from \eqref{E:ConstJacTrans} that $\quo(H(Z))=\quo(Z)$ when $H$ is a biholomorphic map with constant Jacobian; in particular this holds when $H$ is a volume-preserving biholomorphic map.

When $Z$ is the unit sphere we have 
$\meas(Z)=\frac{2^{2n/(n+1)}\pi^{n}}{(n-1)!}$, 
$\vol(Z)=\frac{\pi^{n}}{n!}$ 
and so 
$\quo(Z)=\frac{4\pi n}{\sqrt[n]{(n-1)!}}$.  
In view of the transformation law from the previous paragraph we also have $\quo(Z)=\frac{4\pi n}{\sqrt[n]{(n-1)!}}$ when $Z$ is a constant-Jacobian holomorphic image of a sphere.

\subsection{Plan of paper}  \S \ref{S:spec} contains some remarks on the relation of the topics under discussion to planar euclidean geometry and to Blaschke's equiaffine geometry. 

The remaining sections focus entirely on the case $n=2$ with the exception of a brief discussion of higher dimension in \S \ref{S:inv-iso}.

The analogue of Plateau's problem for $\meas$ is discussed in \S \ref{S:maxhyp}, while the isoperimetric problem for $\quo$ is discussed in \S \ref{S:iso}.  In \S \ref{S:inv-iso} the isoperimetric quotient $\quo$ is modified to  obtain a modified quantity $\quo^*$ that is also invariant under biholomorphic mapping with non-constant Jacobian.

Various discussions involving normalization issues are collected in \S \ref{S:normalize}.

\subsection{Note} In some cases we have verified ``routine computations" below by checking that Mathematica evaluates the difference between the left and right sides of the equation to zero.

\medskip

We thank Xiaojun Huang for helpful remarks.

\section{Special cases and relations to other geometric theories} \label{S:spec}

In complex dimension one the assumption of strong pseudoconvexity is vacuous and Fefferman's measure coincides with euclidean arc length.  As is universally known, line segments minimize $\meas(Z)$ among curves joining two fixed endpoints and circles minimize $\quo(Z)$ among simple closed curves; thus $Q(Z)\ge 4\pi$ for all simple closed $Z$.  

In the case of so-called tubular hypersurfaces invariant under purely imaginary translation then Fefferman's measure essentially coincides with 
Blaschke's equiaffinely invariant surface area (as described for example in [Cal, Prop. 1.1]).   Since such hypersurfaces are never bounded, we move the field of action temporarily to the quotient space $A^n=\C^n/i\Z^n$.  Strongly pseudoconvex tubular  hypersurfaces $Z$ in $A^n$ take the form $Z'\times i(\R^n/\Z^n)$ with $Z'$ a strongly convex hypersurface in $\R^n$; moreover, $\meas(Z)$ is Blaschke's measure $\bmeas(Z')$ of $Z'$ (invariant under volume-preserving affine self-maps of $\R^n$).

When $n=2$ we have that $Z'$ is a strongly convex curve and $\bmeas(Z')$ is given by $\intl_{B(Z)} \sqrt[3]{\kappa}\,ds$, where $\kappa$ is euclidean curvature and $ds$ is euclidean arc length.  Then it is known that parabolic arcs maximize $\bmeas(Z')$ within the corresponding isotopy classes of strongly convex curves with location and tangent direction of endpoints fixed [Bla, \S 16], and also that ellipses maximize 
$\frac{\bmeas(Z')^3}{\vol(Z')}$ among simple closed strongly convex curves   [Bla, \S 26].

The isoperimetric quotient $\quo(Z)$ defined in \S \ref{SS:IsoQuo} is not invariant under (quotients of) dilations acting on $A^2$, so there is no universal positive lower bound or finite upper bound for $\quo(Z)$ in this setting.  The modified quotient $\frac{\meas(Z)^3}{\vol(Z)}$ does have the proper invariance properties, however, and the real isoperimetric result quoted above  implies  that $\frac{\meas(Z)^3}{\vol(Z)}\le 8\pi^2$ for all compact tubular strongly pseudconvex $Z$ in $A^2$ with equality holding only for tubes over ellipses.

Similarly we may use the higher-dimensional affine isoperimetric inequality [Hug] to deduce that 
$\frac{\meas(Z)^{\frac{n+1}{n-1}}}{\vol(Z)}\le \frac{n^{\frac{n+1}{n-1}}\pi^{\frac{n}{n-1}}}{\left(\Gamma\left(\frac{n}{2}+1\right)\right)^{\frac{2}{n-1}}}$
for strongly pseudoconvex tubular $Z\subset A^n$.

\section{Maximal hypersurfaces} \label{S:maxhyp}

\subsection{First variation}\label{SS:FirstVarMax}  We wish to identify the hypersurfaces $Z$ that are stationary for  $\meas(Z)$ with respect to compactly-supported perturbations.  This is a local matter, so we focus on the case where $Z$ is given as a graph
\begin{equation}\label{E:GraphSurf}
v=F(z,u)
\end{equation}
over an open base $B\subset \C\times\R$, where $(z,w)=(z,u+iv)$ are standard coordinates on $\C^2$. Setting 
\begin{equation}\label{E:SpecRho}
\rho(z,w)= -v+F(z,u)
\end{equation}
and applying \eqref{E:FeffMeasInt}, \eqref{E:FeffMeasForm} \eqref{E:MADef} we obtain 
\begin{equation}\label{E:FeffGraph}
\meas(Z) = 2^{2/3}
\intl_B \left(\mu(F)\right)^{1/3}\, dV
\end{equation}
where 
\begin{equation*}
\mu(F)=F_{z\bar{z}}(F_{u}^{2}+1)-F_{zu}(F_{u}+i)F_{\bar{z}}-F_{\bar{z}u}(F_{u}-i)F_{z}+F_{uu}|F_{z}|^{2}
\end{equation*}
and $dV$ denotes euclidean volume on $B$.

To compute the first variation we set
$F_\eps=F_0+\eps \mathring{F}$ where $\mathring{F}$ has compact support.  Then the corresponding hypersurfaces $Z_\eps$ satisfy
\begin{align*}
\meas(Z_\eps) =  \meas(Z_0) -
\eps\cdot\frac{2^{2/3}}{3}\intl_B 
\opa(F_0) \mathring F\,dV
+O(\eps^2),
\end{align*}
 where 
\begin{align*}
\opa(F) = & 
\frac{\partial}{\partial u}\left(2 \mu(F)^{-2/3}F_{u}F_{z\bar{z}}\right)
-\frac{\partial^{2}}{\partial z \,\partial \bar{z}}\left(\mu(F)^{-2/3}(F_{u}^{2}+1)\right)\\
&-\frac{\partial}{\partial \bar{z}}\left(\mu(F)^{-2/3} F_{zu}(F_{u}+i)\right)
-\frac{\partial}{\partial u}\left(\mu(F)^{-2/3} F_{zu} F_{\bar{z}}\right) \\
&+\frac{\partial^{2}}{\partial z \,\partial u}\left(\mu(F)^{-2/3} (F_{u}+i) F_{\bar{z}}\right)
-\frac{\partial}{\partial z}\left(\mu(F)^{-2/3}F_{\bar{z}u} (F_{u}-i)\right)
\\
&-\frac{\partial}{\partial u}\left(\mu(F)^{-2/3}F_{\bar{z}u} F_{z}\right)
+\frac{\partial^{2}}{\partial \bar{z} \,\partial u}\left(\mu(F)^{-2/3}(F_{u}-i)F_{z}\right)\\
&+\frac{\partial}{\partial \bar{z}}\left(\mu(F)^{-2/3} F_{uu} F_{z}\right) 
+\frac{\partial}{\partial z}\left(\mu(F)^{-2/3} F_{uu} F_{\bar{z}}\right)
-\frac{\partial^{2}}{\partial u \,\partial u}\left(\mu(F)^{-2/3} F_{z} F_{\bar{z}}\right).
\end{align*}
(Here integration by parts was used to take derivatives off of $\mathring F$.)  

Thus we have the following.
\begin{Thm}[{[Ham, Theorem 16]}]\label{T:FirstVarMaxGraph}
A hypersurface in graph form \eqref{E:GraphSurf} is stationary for $\meas$ (with respect to compactly supported perturbations) if and only if $F$ satisfies $\opa(F)\equiv0$.
\end{Thm}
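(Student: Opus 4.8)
The plan is to justify the first-variation expansion displayed immediately above the statement and then to invoke the fundamental lemma of the calculus of variations. Starting from \eqref{E:FeffGraph}, which gives $\meas(Z_\eps)=2^{2/3}\intl_B(\mu(F_\eps))^{1/3}\,dV$ with $F_\eps=F_0+\eps\mathring F$ and $\mathring F$ smooth of compact support in $B$, I would first observe that $\mu$ is a polynomial in $F$ and its first and second partials, so $\eps\mapsto\mu(F_\eps)(p)$ is smooth for each $p$; moreover $\mu(F_0)$ is continuous and strictly positive on the compact set $\supp\mathring F$ by strong pseudoconvexity, so $\eps\mapsto(\mu(F_\eps))^{1/3}$ is smooth with $\eps$-derivatives bounded uniformly over $\supp\mathring F$. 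Differentiating under the integral sign then yields
\[
\frac{d}{d\eps}\Big|_{\eps=0}\meas(Z_\eps)=\frac{2^{2/3}}{3}\intl_B \mu(F_0)^{-2/3}\,\mathring\mu\,dV,
\qquad \mathring\mu:=\frac{d}{d\eps}\Big|_{\eps=0}\mu(F_\eps),
\]
where $\mathring\mu$ is computed by applying the product rule to the four summands of $\mu$: its second-order part in $\mathring F$ is $(F_u^2+1)\mathring F_{z\bar z}-(F_u+i)F_{\bar z}\mathring F_{zu}-(F_u-i)F_z\mathring F_{\bar z u}+|F_z|^2\mathring F_{uu}$ (all coefficients at $F_0$), together with first-order terms in $\mathring F$ that appear when the perturbation instead falls on one of the first-order factors $F_u$, $F_z$, $F_{\bar z}$ in a summand of $\mu$.

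The second step is integration by parts to strip all derivatives from $\mathring F$. Since $\mathring F$ has compact support in $B$ no boundary terms appear, and each of the eleven pieces produced above --- two from $F_{z\bar z}(F_u^2+1)$ and three from each of $-F_{zu}(F_u+i)F_{\bar z}$, $-F_{\bar z u}(F_u-i)F_z$, and $F_{uu}|F_z|^2$ --- has the shape (a function of $F_0$ and its derivatives) times a pure derivative of $\mathring F$ of order one or two. Integrating each piece by parts the appropriate number of times (once for the first-order pieces, twice for the $\mathring F_{z\bar z}$, $\mathring F_{zu}$, $\mathring F_{\bar z u}$, $\mathring F_{uu}$ pieces) and tracking signs should convert $\intl_B\mu(F_0)^{-2/3}\mathring\mu\,dV$ into $-\intl_B\opa(F_0)\mathring F\,dV$, the eleven resulting integrands assembling exactly into the displayed eleven-term operator $\opa(F_0)$; which factor the perturbation lands on dictates the grouping (for instance the linearization of $-F_{zu}(F_u+i)F_{\bar z}$ contributes a $\partial_z\partial_u$-derivative of $\mu^{-2/3}(F_u+i)F_{\bar z}$, a $\partial_{\bar z}$-derivative of $\mu^{-2/3}F_{zu}(F_u+i)$, and a $\partial_u$-derivative of $\mu^{-2/3}F_{zu}F_{\bar z}$). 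I expect this bookkeeping --- routing each first-order piece into the correct integration by parts and verifying nothing is dropped or double-counted --- to be the main (if only tedious) obstacle; it is a ``routine computation'' of the kind flagged in the introduction and is carried out in detail in [Ham]. Note that $\mu$, $\opa(F_0)$, and $\mathring F$ are all real-valued despite the complex notation, so this is an ordinary real integration by parts over $B\subset\R^3$.

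With the expansion $\meas(Z_\eps)=\meas(Z_0)-\eps\,\frac{2^{2/3}}{3}\intl_B\opa(F_0)\mathring F\,dV+O(\eps^2)$ in hand, the conclusion is immediate: $Z_0$ is stationary precisely when $\intl_B\opa(F_0)\mathring F\,dV=0$ for every smooth compactly supported $\mathring F$ on $B$, and since $\opa(F_0)$ is continuous on $B$ (using $\mu(F_0)>0$ once more), the fundamental lemma of the calculus of variations makes this equivalent to $\opa(F_0)\equiv0$ on $B$; the reverse implication is trivial. Alternatively one may simply cite [Ham, Theorem 16], but the route above is self-contained modulo the derivation of $\opa$.
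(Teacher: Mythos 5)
Your proposal is correct and follows essentially the same route as the paper: derive the first-variation formula $\meas(Z_\eps)=\meas(Z_0)-\eps\,\tfrac{2^{2/3}}{3}\intl_B\opa(F_0)\mathring F\,dV+O(\eps^2)$ by differentiating under the integral sign and integrating by parts to strip derivatives off $\mathring F$, then apply the fundamental lemma of the calculus of variations. The paper presents this expansion just before the theorem statement and cites [Ham, Theorem 16] for the details; your eleven-piece bookkeeping for $\mathring\mu$ (two from $F_{z\bar z}(F_u^2+1)$ and three from each of the other summands) matches the eleven terms of $\opa$ and correctly accounts for the signs.
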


If the base $B$ is bounded with smooth boundary then the same result holds for perturbations satisfying homogeneous Dirichlet and Neumann boundary conditions.

\subsection{Geometric interpretation}\label{SS:ChrisThesis}
The paper [Ham] contains the following result.
\begin{Thm}\label{T:VPNormalForm}
Let $p\in Z$, where $Z$ is a strongly pseudoconvex hypersurface in $\C^2$.  Then there is a volume-preserving biholomorphic map defined on a neighborhood of $p$ taking $p$ to $0$ and taking $Z$ to a hypersurface of the form
\begin{equation*}\label{E:VPNormalForm}
v=|z|^{2} +\kappa |z|^{4}+\gamma z^{3} \bar{z} + \gamma z \bar{z}^{3} 
+ O\left(|z|^5+|u||z|^3+u^2|z|+|u|^3
\right)
\end{equation*}
with $\kappa\in \R$, $\gamma\in\R_{\ge 0}$.  The quantities $\kappa$ and $\gamma$ are uniquely determined by $p$ and $Z$.
\end{Thm}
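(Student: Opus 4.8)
The plan is to run a Chern--Moser-style normalization, but carried out inside the smaller group of volume-preserving biholomorphisms; since this group kills fewer Taylor coefficients, more of them survive, and what survives is exactly the pair $\kappa,\gamma$. Write $Z$ near $p$ as a graph $v=F(z,\bar z,u)$ over a neighborhood of the origin, assign weights $z,\bar z\mapsto1$, $u,v\mapsto2$, and write $O(k)$ for a remainder of weighted order $\ge k$. First I would perform the affine reduction: translate $p$ to $0$, apply a complex-linear map taking the real tangent hyperplane to $\{v=0\}$ and the complex tangent line to $\{w=0\}$, remove the harmonic part of the quadratic terms by a shear $w\mapsto w+(\text{quadratic in }z)$, and rescale $(z,w)\mapsto(\mu z,\nu w)$ so that the Levi form becomes $|z|^2$, reaching $v=|z|^2+O(3)$. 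A short bookkeeping check shows the available parameters --- the determinant of the linear map and $\mu,\nu$, subject only to the one relation forced by Levi normalization --- can be chosen so that the composite has unimodular Jacobian, i.e.\ is volume-preserving; one real rotation parameter ($\arg\mu$) remains unused.

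For the main step I would normalize weight by weight using time-one flows of divergence-free holomorphic vector fields, that is, of the Hamiltonian fields $X_h=h_w\,\partial_z-h_z\,\partial_w$ ($h$ holomorphic), together with the rotations $(z,w)\mapsto(e^{i\theta}z,w)$; all of these have unimodular constant Jacobian. If $h$ is homogeneous of weight $m\ge4$ then the flow of $X_h$ first alters the weight-$(m-1)$ part of $F$ and leaves weights $<m-1$ unchanged. Two rank statements are needed: (i) as $h$ runs over the weight-four monomials $z^4,z^2w,w^2$, the induced change in the weight-three part of $F$ is an isomorphism onto the full six-real-dimensional space of weight-three terms of $F$; and (ii) as $h$ runs over the weight-five monomials $z^5,z^3w,zw^2$, the induced change in the weight-four part of $F$ is an isomorphism onto a six-real-dimensional subspace of the nine-dimensional space of weight-four terms, complementary to the three-dimensional space $\{\kappa z^2\bar z^2+\Re(cz^3\bar z):\kappa\in\R,\ c\in\C\}$. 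Applying (i) and then (ii) brings $Z$ to $v=|z|^2+\kappa_0z^2\bar z^2+\Re(c_2z^3\bar z)+O(5)$ with $\kappa_0\in\R$, $c_2\in\C$.

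Finally, the rotation $(z,w)\mapsto(e^{i\theta}z,w)$ fixes $\kappa_0$ and sends $c_2\mapsto e^{-2i\theta}c_2$; choosing $\theta=\tfrac12\arg c_2$ makes this coefficient equal $2\gamma$ with $\gamma\ge0$, and setting $\kappa=\kappa_0$ gives the asserted form. For uniqueness of $\kappa$ and $\gamma$: if a volume-preserving biholomorphism $H$ fixes $0$ and carries one such form to another, its linear part preserves $\{v=|z|^2\}$ and has unimodular determinant, hence is a rotation; composing with its inverse we may assume $H$ is tangent to the identity. Then $H$ changes the weight-three part of $F$ by an element in the image of the map in (i); since both forms have zero weight-three part this change is $0$, and by the injectivity in (i) the generating field begins at weight $\ge2$, so $H$ changes the weight-four part by an element of the complementary subspace of (ii). But that change is a difference of weight-four parts of (rotated) normal forms, hence lies in $\{\kappa z^2\bar z^2+\Re(cz^3\bar z)\}$, which meets the subspace of (ii) only at $0$; so it vanishes, forcing the $z^2\bar z^2$-coefficients to agree and the $z^3\bar z$-coefficients to agree up to the residual rotation --- which, both being real and nonnegative, gives $\kappa'=\kappa$, $\gamma'=\gamma$.

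The hard part is (i) and (ii): one must record explicitly how a Hamiltonian flow acts on the low-order Taylor coefficients of the graph function and then verify the rank and image of the resulting $6\times6$ and $6\times9$ linear maps. In (i) the only delicacy is that the kernel of the corresponding map for \emph{all} tangent-to-the-identity coordinate changes is transverse to the divergence-free subspace, so surjectivity survives the restriction to volume-preserving maps; (ii) has more monomials and is the computation most naturally checked with machine assistance of the kind the authors describe. The affine reduction and the final deductions are routine or formal.
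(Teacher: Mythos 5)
The paper only states Theorem~\ref{T:VPNormalForm} and cites [Ham] for the proof, so there is no in-paper argument to compare against directly. Your proposal---a Moser-style weighted normalization carried out inside the group of volume-preserving ($|\det H'|=1$) biholomorphisms, whose infinitesimal isotropy at $0$ consists of the holomorphic Hamiltonian fields $X_h$ together with the rotation $iz\,\dee_z$---is the natural route and surely aligns with [Ham]. The two linear-algebra claims you single out as the crux are in fact correct. For (i), writing a weight-one field as $(a_1z^2+a_2w)\dee_z+(b_1z^3+b_2zw)\dee_w$, the kernel of the unrestricted weight-one normalization map is two-real-dimensional (this is $\mathfrak g_1\subset\mathfrak{su}(2,1)$; one computes $b_1=0$, $a_1=b_2$, $a_2=i\bar b_2/2$), and it meets the six-dimensional Hamiltonian subspace $\{b_2=-2a_1\}$ only at $0$, so the restriction is a bijection onto all six weight-three terms. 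A direct check at the next weight shows the six weight-five Hamiltonian parameters map isomorphically onto a six-dimensional subspace of the nine weight-four terms whose intersection with $\Span_{\R}\{|z|^4,\,\Re(z^3\bar z),\,\Im(z^3\bar z)\}$ is zero, giving (ii). One wording slip in the uniqueness paragraph: a tangent-to-identity $H$ changes $F^{(4)}$ by an element of the six-dimensional \emph{image} of the map in (ii), not of the ``complementary subspace''; the next sentence then correctly exploits the trivial intersection. Otherwise the argument is sound, modulo the two rank computations you explicitly defer---and which do come out as claimed.
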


Thus $\kappa$ and $\gamma$ define functions on $Z$.  
Up to multiplicative constants these quantities correspond, respectively, to the curvature and (the size of) the torsion invariant in Webster's pseudo-Hermitian geometry [Web].  (See \S \ref{SS:web-curv} concerning the values of the constants.)

When $Z$ is in graph form \eqref{E:GraphSurf} it is furthermore shown in [Ham] that
\begin{equation*}
\kappa = \frac{3}{8} \,\opa(F).
\end{equation*}

\begin{Cor}\label{C:FirstVarMaxKappa} 
A strongly pseudoconvex hypersurface $Z\subset\C^2$  is stationary for $\meas$ (with respect to compactly supported perturbations) if and only if the invariant $\kappa$ described above vanishes identically on $Z$.
\end{Cor}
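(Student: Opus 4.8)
The plan is to deduce Corollary~\ref{C:FirstVarMaxKappa} from Theorem~\ref{T:FirstVarMaxGraph} together with the transformation properties recorded in \S\ref{SS:IsoQuo} and the local description in \S\ref{SS:ChrisThesis}. The key observation is that the property of being stationary for $\meas$ with respect to compactly supported perturbations is a local and biholomorphically-invariant notion: a hypersurface $Z$ is stationary precisely when each point $p\in Z$ has a neighborhood in which $Z$ is stationary for $\meas$ among hypersurfaces agreeing with $Z$ off a compact set, and this local condition is unchanged if we replace $Z$ by its image under a volume-preserving biholomorphic map defined near $p$ (by the transformation law \eqref{E:ConstJacTrans} for $\meas$, perturbations of $Z$ correspond under such a map to perturbations of the image with $\meas$ scaled by the constant $\left|\det H'\right|^{4/3}$, so critical points are preserved).

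First I would fix $p\in Z$ and, using Theorem~\ref{T:VPNormalForm}, choose a volume-preserving biholomorphic map carrying a neighborhood of $p$ to a graph $v=F(z,u)$ of the normal form there; in particular $F_z(0)=F_u(0)=0$, so the graph is non-characteristic near $p$ and Theorem~\ref{T:FirstVarMaxGraph} applies. Thus $Z$ is stationary near $p$ if and only if $\opa(F)\equiv 0$ near $0$. Next I would invoke the identity $\kappa=\tfrac38\,\opa(F)$ recorded at the end of \S\ref{SS:ChrisThesis} for graphs, which identifies the vanishing of $\opa(F)$ with the vanishing of the function $\kappa$ on $Z$ near $p$; since $\kappa$ is itself a biholomorphic invariant defined pointwise on $Z$ (by the uniqueness clause of Theorem~\ref{T:VPNormalForm}), its vanishing near $p$ does not depend on the choice of chart. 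Letting $p$ range over $Z$ then gives: $Z$ is stationary for $\meas$ if and only if $\kappa\equiv 0$ on $Z$.

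The only point requiring care — and the step I expect to be the main obstacle — is the bookkeeping needed to pass between ``stationary as an abstract hypersurface'' and ``stationary in graph coordinates,'' i.e.\ verifying that a compactly supported perturbation of $Z$ near $p$ corresponds, under the chosen volume-preserving biholomorphism and after possibly shrinking the neighborhood, to a compactly supported perturbation of the graph $v=F(z,u)$ of the form $F_\eps=F+\eps\mathring F$ with $\mathring F$ compactly supported, and conversely; this is where one uses that the graph is non-characteristic (guaranteed by the normal form) so that nearby hypersurfaces remain graphs over the same base. Granting this routine localization argument, the corollary follows by combining Theorem~\ref{T:FirstVarMaxGraph}, the relation $\kappa=\tfrac38\opa(F)$, and the invariance of both $\meas$-stationarity and $\kappa$ under volume-preserving biholomorphism.
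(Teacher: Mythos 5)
Your argument is correct and matches the reasoning the paper leaves implicit: combine Theorem~\ref{T:FirstVarMaxGraph} with the identity $\kappa=\tfrac38\,\opa(F)$ for graphs, using the locality of stationarity and the fact that both stationarity and $\kappa$ are preserved under volume-preserving biholomorphism. One small simplification worth noting: invoking the full normal form of Theorem~\ref{T:VPNormalForm} is a detour, since any strongly pseudoconvex $Z$ is, near each $p$, a graph $v=F(z,u)$ after a unitary rotation of $\C^2$ (which has unit Jacobian and hence preserves $\meas$), and the identity $\kappa=\tfrac38\,\opa(F)$ is already stated in \S\ref{SS:ChrisThesis} for \emph{arbitrary} graphs, not just those in normal form; the corollary then follows immediately from Theorem~\ref{T:FirstVarMaxGraph} without the extra normalization step.
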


\subsection{Examples}  For so-called ``rigid" hypersurfaces of the form 
\begin{equation}\label{E:Rigid}
v=F(z)
\end{equation}
we have
\begin{equation*}
\opa(F)=-\big(F_{z\bar z}^{-2/3}\big)_{z\bar z}.
\end{equation*}
Thus a hypersurface of the form \eqref{E:Rigid} will be stationary for $\meas$ if and only if $F_{z\bar z}^{-2/3}$ is harmonic.  Specific examples include the Heisenberg group
\begin{equation}\label{E:Heisenberg}
v=|z|^2
\end{equation}
as well as the hypersurface
\begin{equation}\label{E:HeisVar}
v=\sqrt{z^{-2}+\bar z^{-2}}.
\end{equation}
These examples are locally biholomorphic via the (nonconstant-Jacobian) map \[(z,w)\mapsto (w,4iz^{-2}+iw^2).\]

\subsection{Second variation} 

\subsubsection{Heisenberg group}
Fix an open $B\subset\subset\C\times\R$ and pick (real-valued) $\mathring F\in C^2_0(B)$.  Let $Z_\eps$ be the graph hypersurface over $B$ defined by $v=|z|^2+\eps \mathring F (z,u)$; thus $Z_0$ is an open subset of the Heisenberg group.  

We introduce 
the vector fields 
\begin{align}\label{E:VFHeis}
L &= \frac{\dee}{\dee z} + i\bar z \frac{\dee}{\dee u}\notag\\
\bar L &= \frac{\dee}{\dee \bar z} - i z \frac{\dee}{\dee u}\\
T &= \frac{\dee}{\dee u}\notag
\end{align}
on $\C\times\R$ satisfying 
\begin{align}\label{E:HeisVFRelations}
L\bar L - \bar L L &= -2i T\notag\\
LT &= TL\\
\bar LT &= T\bar L.\notag
\end{align}  
(For motivation, see for example  [JL2].)

By routine computation we verify that
\begin{equation}\label{E:Heis2VarV1}
\meas(Z_\eps) =  \meas(Z_0) -
\frac{\eps^2}{9}
\intl_B \opb(\mathring F)\,dV
+O(\eps^3),
\end{equation}
where
\begin{align*}
\opb(F) =&
L\bar L F\cdot \bar L L F\\
&+3i\left(LTF\cdot\bar L F-\bar L T F\cdot LF\right)\\
& + 2 TF\cdot\left( L\bar L + \bar L L\right)F\\
&+6 TF \cdot F_{z\bar z}.
\end{align*}

We note that $L$, $\bar L$, $T$, $\frac{\dee}{\dee z}$ and $\frac{\dee}{\dee \bar z}$ are all divergence free; thus when integrating by parts we pick up a minus sign but no lower-order terms.  

Integrating by parts three times we find that
$
2\intl_B TF\cdot\left( L\bar L + \bar L L\right)F\,dV
$
and
$
6\intl_B  TF \cdot F_{z\bar z}\,dV
$
are both equal to their own negatives and thus must vanish. Furthermore, integrating by parts once and using \eqref{E:HeisVFRelations} we find that
\begin{align}
3i\intl_B \left(LTF\cdot\bar L F-\bar L T F\cdot LF\right)\,dV
&= -3i\intl_B TF\cdot \left(L\bar L-\bar L L\right)F\,dV\notag\\
&= 6\intl_B TF\cdot TF\,dV.\label{E:iTerm}
\end{align} 
So \eqref{E:Heis2VarV1} can be revised to read
\begin{equation}\label{E:Heis2VarV2}
\meas(Z_\eps) =  \meas(Z_0) -
\frac{\eps^2}{9}
\intl_B \left( |L\bar L \mathring F|^2 + 6|T\mathring F|^2\right)
 \,dV
+O(\eps^3).
\end{equation}

Thus the second variation form is negative semi-definite.
In fact we have the following.
\begin{Thm}\label{T:HeiVarNeg}
The second variation form $-\frac{1}{9} \intl_B \left( |L\bar L F|^2 + 6|TF|^2\right)\,dV$ for the problem described above is negative definite for (real-valued) $F\in C^2_0(B)$.
\end{Thm}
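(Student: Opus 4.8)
The plan is to use the fact, already established above, that the form is negative semidefinite, and to upgrade this to definiteness by showing its null space is trivial. So suppose $F\in C^2_0(B)$ is real-valued and $\intl_B\left(|L\bar L F|^2 + 6|TF|^2\right)\,dV = 0$. The integrand is continuous and nonnegative, hence it vanishes identically on $B$; in particular $TF\equiv 0$ and $L\bar L F\equiv 0$ on $B$. The first of these says $F_u\equiv 0$ on $B$.

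Next I would simplify $L\bar L F$ using the constraint $F_u\equiv 0$. Expanding
\[
L\bar L F=(\partial_z+i\bar z\,\partial_u)(\partial_{\bar z}-iz\,\partial_u)F
= F_{z\bar z}-iF_u-izF_{zu}+i\bar z F_{u\bar z}+z\bar z F_{uu},
\]
and noting that $F_u$ vanishes on the open set $B$ so that its first-order derivatives $F_{zu}$, $F_{u\bar z}$, $F_{uu}$ vanish there as well, every term but the first drops, leaving $L\bar L F=F_{z\bar z}$ on $B$. Hence also $F_{z\bar z}\equiv 0$ on $B$.

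Finally I would extend $F$ by zero to a function on all of $\C\times\R$; since $\supp F$ is a compact subset of $B$, the extension is again $C^2$, and the identities $F_u\equiv 0$ and $F_{z\bar z}\equiv 0$ persist on $\C\times\R$ (they hold on $B$, and they hold near every point of the complement of $\supp F$, where $F$ vanishes). Thus for each fixed $u$ the function $z\mapsto F(z,u)$ is harmonic on all of $\C$ and vanishes outside a compact set, so the maximum principle forces it to be identically zero. Therefore $F\equiv 0$, which is exactly what negative definiteness requires.

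I do not expect a serious obstacle here; the proof is essentially a rigidity observation for the semidefinite form. The only points needing care are the bookkeeping in the reduction $L\bar L F = F_{z\bar z}$ (checking that every term carrying a $u$-derivative really does drop) and the verification that the zero-extension is genuinely $C^2$, so that Liouville's theorem may be applied on the whole plane.
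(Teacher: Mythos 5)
Your proof is correct, but it takes a different route from the paper's. You exploit the explicit coordinate expression of $L\bar L$: once $TF\equiv 0$ forces $F_u\equiv 0$, every $u$-derivative term in $L\bar L F$ drops pointwise, leaving $F_{z\bar z}\equiv 0$; you then extend by zero and invoke Liouville/the maximum principle for harmonic functions on $\C$. The paper instead stays entirely at the level of the vector fields and never expands them in coordinates: it integrates by parts once to get $\intl_B |LF|^2\,dV = -\intl_B F\cdot L\bar L F\,dV = 0$, concludes $LF\equiv 0$ and similarly $\bar LF\equiv 0$, and then notes that together with $TF\equiv 0$ this kills all first derivatives, so $F$ is locally constant with compact support, hence zero. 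The paper's argument is a touch cleaner in that it avoids the zero-extension and Liouville entirely, and it generalizes verbatim to any family of divergence-free vector fields that together with their brackets span the tangent space; your version is more concrete and makes the role of the Heisenberg structure visible. Both handle the subtle points correctly: you are right that $F_u\equiv 0$ on the open set $B$ annihilates $F_{zu},F_{u\bar z},F_{uu}$ there, and that the zero-extension of a $C^2_0(B)$ function is globally $C^2$ because $\supp F$ is a compact subset of the open set $B$.
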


\begin{proof}
It suffices to show that $L\bar L F\equiv 0$ and $TF\equiv 0$ imply $F\equiv 0$.  But these hypotheses imply that
$\intl_B |LF|^2\,dV = -\intl_B F\cdot  L \bar L F\,dV \equiv 0$ and thus $LF\equiv 0$; a similar computation shows that $\bar LF\equiv 0$.  Thus all first derivatives of $F$ vanish implying that $F\equiv 0$ as claimed.
\end{proof}

\subsubsection{Partial global result}  The results of the previous section indicate that (bounded open subsets of) the Heisenberg group \eqref{E:Heisenberg} are 
locally maximal  for Fefferman measure $\meas$ with respect to any smooth compactly-supported finite-dimensional family   of perturbations.   In fact we can say more.

\begin{Thm}\label{T:HeisSemiGlobal}
Suppose that $\tilde F\in C^2_0(B)$ satisfies 
\begin{equation}\label{E:AdHocHyp}
\tilde F_{z\bar z}\le \frac{1}{3}.
\end{equation}
Let 
\begin{align*}
Z_0&=\{(z,u+iv)\st (z,u)\in B, v=|z|^2\}\\
Z&=\{(z,u+iv)\st (z,u)\in B, v=|z|^2+\tilde F(z,u)\}.
\end{align*}
Then $\meas(Z)\le\meas(Z_0)$.
\end{Thm}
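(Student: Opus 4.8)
The plan is to work with the graph representation. Since $\tilde F\in C^2_0(B)$, the hypersurfaces $Z$ and $Z_0$ coincide outside $\supp\tilde F$, so by \eqref{E:FeffGraph} the inequality $\meas(Z)\le\meas(Z_0)$ is equivalent to
\[
\int_B\bigl(\mu(|z|^2+\tilde F)\bigr)^{1/3}\,dV\ \le\ \int_B\bigl(\mu(|z|^2)\bigr)^{1/3}\,dV\ =\ \int_B dV ,
\]
the last equality because $\mu(|z|^2)\equiv 1$. Write $F=|z|^2+\tilde F$. The structural fact I would isolate first is that, with the first-order jet of $F$ held fixed, $\mu(F)$ is an affine function of the pure second derivatives $(F_{z\bar z},F_{zu},F_{\bar zu},F_{uu})$, as is visible from the formula defining $\mu$; since $t\mapsto t^{1/3}$ is concave and increasing, $\mu(F)^{1/3}$ is then a concave function of those second derivatives wherever $\mu(F)>0$.

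The tool I would use is the resulting tangent-plane inequality: for any positive function $\Theta$ on $B$,
\[
\mu(F)^{1/3}\ \le\ \tfrac23\,\Theta^{1/3}+\tfrac13\,\Theta^{-2/3}\,\mu(F),
\]
with equality exactly where $\Theta=\mu(F)$ (the choice $\Theta\equiv1$ merely reproduces $\mu^{1/3}\le 1+\tfrac13(\mu-1)$). The aim is to choose $\Theta$ explicitly — built from $\tilde F$ and its first derivatives, reducing to $\Theta\equiv1$ when $\tilde F\equiv0$ — so that $\int_B\Theta^{-2/3}\mu(F)\,dV$ can be integrated by parts into an expression dominated by $\int_B\Theta^{1/3}\,dV$. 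The mechanical input that keeps the integration by parts clean is that the vector fields $L,\bar L,T$ of \eqref{E:VFHeis}, together with $\partial_z$ and $\partial_{\bar z}$, are all divergence-free, so — $\tilde F$ having compact support — no boundary or lower-order terms arise; in particular one obtains the identity
\[
\int_B\bigl(\mu(F)-1\bigr)\,dV\ =\ 3\int_B\tilde F_u^2\,(1+\tilde F_{z\bar z})\,dV ,
\]
which already shows that the naive choice $\Theta\equiv1$ delivers only $\int_B\mu(F)^{1/3}\,dV\le\int_B dV+\int_B\tilde F_u^2(1+\tilde F_{z\bar z})\,dV$, overshooting the target by a genuinely positive amount wherever $1+\tilde F_{z\bar z}>0$.

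The crux is thus to pick $\Theta$ so as to absorb that residual term, and the hypothesis $\tilde F_{z\bar z}\le\tfrac13$ — equivalently $F_{z\bar z}\le\tfrac43$ — is precisely the condition under which the integration by parts, fed back into the tangent-plane inequality, closes up to $\tfrac23\int_B\Theta^{1/3}\,dV+\tfrac13\int_B\Theta^{-2/3}\mu(F)\,dV\le\int_B dV$. An essentially equivalent and perhaps more transparent route runs the comparison along the family $F_t=|z|^2+t\,\tilde F$, $0\le t\le1$: the function $t\mapsto\meas(Z_t)$ has vanishing derivative at $t=0$ because the Heisenberg group is stationary for $\meas$ (Corollary~\ref{C:FirstVarMaxKappa}), and the hypothesis forces its second derivative to remain $\le0$ throughout $[0,1]$ — that second-derivative computation being the same divergence-free integration-by-parts bookkeeping that proves negativity in Theorem~\ref{T:HeiVarNeg} — whence $\meas(Z_1)\le\meas(Z_0)$. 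I expect the main obstacle to be exactly this last bookkeeping: pinning down the correct $\Theta$ (equivalently, controlling the sign of the second derivative along the entire family, not merely at the Heisenberg end), and verifying that the bound $\tilde F_{z\bar z}\le\tfrac13$, rather than the weaker inequality that mere strong pseudoconvexity of $Z$ would supply, is what makes every link of the chain point the right way. The other ingredients — the graph formula, the concavity of $t\mapsto t^{1/3}$, and the divergence-free integration by parts — are routine.
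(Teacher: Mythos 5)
Your overall strategy is exactly the paper's: the concavity of $t\mapsto t^{1/3}$ (your tangent-plane inequality at $\Theta\equiv 1$ is equivalent to the H\"older inequality the paper uses), plus a divergence-free integration by parts of $\mu(F)$ over $B$. But the identity you state for $\intl_B(\mu(F)-1)\,dV$ has the wrong sign, and that error is what makes you believe the naive choice $\Theta\equiv 1$ ``overshoots'' and that a cleverer $\Theta$ is needed. The correct result of the integration by parts (collecting the cross terms $-iLT\tilde F\cdot\bar L\tilde F+i\bar L T\tilde F\cdot L\tilde F$, which after one integration by parts contribute $-2\intl_B\tilde F_u^2\,dV$, not a positive multiple of $\tilde F_u^2$) is
\begin{equation*}
\intl_B \mu(F)\,dV \;=\; \intl_B \left(1+(3\tilde F_{z\bar z}-1)\,\tilde F_u^2\right)dV,
\end{equation*}
not $3\intl_B\tilde F_u^2(1+\tilde F_{z\bar z})\,dV$. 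Under the hypothesis $\tilde F_{z\bar z}\le\tfrac13$ the integrand on the right is $\le 1$, so $\intl_B\mu(F)\,dV\le\Vol(B)$, and H\"older (equivalently your tangent-plane bound with $\Theta\equiv 1$) immediately closes the argument: $\meas(Z)\le(2\Vol(B))^{2/3}(\Vol(B))^{1/3}=\meas(Z_0)$.

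In short, there is no residual term to absorb and no clever $\Theta$ to hunt for; the bound $\tilde F_{z\bar z}\le\tfrac13$ is exactly what makes $(3\tilde F_{z\bar z}-1)\tilde F_u^2\le 0$ pointwise. Your alternative route via monotonicity of the second derivative along the family $F_t=|z|^2+t\tilde F$ is a legitimate idea and is not the paper's, but you have not carried it out; as stated it is a sketch whose crucial step (sign control of the second variation at every $t$, not just $t=0$) is left open, whereas the direct H\"older route is a two-line finish once the integration-by-parts identity is computed correctly.
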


\begin{proof}\,[Compare proof of Proposition 3 in [Bol].]
Let $F(z)=|z|^2+\tilde F(z,u)$. Using H\" older's inequality we have
\begin{equation}\label{E:Holder1}
\meas(Z)
=
2^{2/3}
\intl_B \left(\mu(F)\right)^{1/3}\, dV 
\le (2\Vol(B))^{2/3}
\left(\intl_B \mu(F)\, dV\right)^{1/3}.
\end{equation}

By routine computation  we verify that
\begin{align*}
\mu(F)
=&
1 + \tilde F_u^2 + 3 \tilde F_{z\bar z}\tilde F_u^2\\
&-iLT\tilde F\cdot\bar L \tilde F+i\bar L T \tilde F\cdot L\tilde F\\
&+\frac{1}{2}\left(L\bar L + \bar L L\right)\tilde F\\
&+\frac{\dee}{\dee u}
\left(
2\tilde F - \tilde F_z\tilde F_{\bar z}+ \tilde F_z\tilde F_{\bar z}\tilde F_u
\right)\\
&+\frac{\dee}{\dee z}
\left(
\tilde F_{\bar z}\tilde F_u-\tilde F_{\bar z}\tilde F_u^2
\right)\\
&+\frac{\dee}{\dee \bar z}
\left(
\tilde F_{z}\tilde F_u-\tilde F_{z}\tilde F_u^2
\right).
\end{align*}
From integration by parts we see that the last four terms integrate to zero. Consulting \eqref{E:iTerm} we find that
\begin{equation}\label{E:CubeSimp}
\intl_B \mu(F)\, dV
= \intl_B \left(1+(3\tilde F_{z\bar z}-1)\tilde F_u^2\right)\, dV.
\end{equation}

Thus if \eqref{E:AdHocHyp} holds we may combine 
\eqref{E:Holder1} and \eqref{E:CubeSimp} to obtain
\begin{equation*}
\meas(Z)
\le (2\Vol(B))^{2/3}(\Vol(B))^{1/3}
=\meas(Z_0)
\end{equation*}
as claimed.
\end{proof}

\begin{Remark}
If $\tilde F$ is not $\equiv 0$ then there must be points near the boundary of the support of $\tilde F$ where 
$(3\tilde F_{z\bar z}-1)\tilde F_u^2<1$.  It follows that 
equality holds in the conclusion of Theorem \ref{T:HeisSemiGlobal} only when $Z=Z_0.$
\end{Remark}

\begin{Prob}
Does Theorem \ref{T:HeisSemiGlobal} hold without the hypothesis \eqref{E:AdHocHyp}?  What happens for more general (non graph-like) perturbations?
\end{Prob}

\section{The isoperimetric problem} \label{S:iso}

\subsection{First variation} Returning to the assumptions of \S \ref{SS:IsoQuo} we seek hypersurfaces $Z$ that are stationary for the isoperimetric quotient $\quo(Z)$. 

\begin{Thm}[{[Ham, Theorem 19]}]\label{T:FirstVarQuoKappa}  
Let $Z\subset\C^2$ be the compact strongly pseudoconvex boundary of a bounded domain.   Then $Z$  is stationary for $\quo$  if and only if the invariant $\kappa$ described in \S \ref{SS:ChrisThesis} is constant on $Z$.
\end{Thm}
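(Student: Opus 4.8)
The plan is to compute the first variation of $\quo(Z)=\meas(Z)^{3/2}/\vol(Z)$ for a compactly-supported normal perturbation, expressing everything locally in graph coordinates so that Theorem~\ref{T:FirstVarMaxGraph} and the identity $\kappa=\frac38\opa(F)$ from \S\ref{SS:ChrisThesis} can be brought to bear. Writing $Z$ locally as $v=F(z,u)$ and perturbing $F_\eps=F_0+\eps\mathring F$ with $\mathring F$ compactly supported, we have from \eqref{E:FeffGraph} the expansion $\meas(Z_\eps)=\meas(Z_0)-\eps\cdot\frac{2^{2/3}}{3}\intl_B \opa(F_0)\mathring F\,dV+O(\eps^2)$, while the enclosed volume varies as $\vol(Z_\eps)=\vol(Z_0)+\eps\intl_B \mathring F\,dV+O(\eps^2)$ (the graph moves inward/outward by $\eps\mathring F$ in the $v$-direction). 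Here one must be slightly careful: since $v=F$ is only a local graph, one should phrase the global computation intrinsically — $\meas$ varies by $-\frac23\intl_Z \kappa\,\phi\,\sigma_Z$ and $\vol$ varies by a constant multiple of $\intl_Z \phi\,\sigma_Z$ for the normalized normal perturbation $\phi$, once one matches Fefferman's volume form $\sigma_Z$ against euclidean surface measure — but the coefficients are determined by the local graph formulas above together with $\opa(F)=\frac83\kappa$.

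Next I would differentiate the quotient: at a stationary point, $0=\frac{d}{d\eps}\Big|_0\quo(Z_\eps)=\frac{\meas^{1/2}}{\vol^2}\left(\tfrac32\vol\cdot\meas'(0)-\meas\cdot\vol'(0)\right)$, so stationarity is equivalent to $\tfrac32\vol(Z_0)\meas'(0)=\meas(Z_0)\vol'(0)$ for all admissible $\mathring F$. Substituting the two first-variation formulas, this reads, up to fixed nonzero constants $c_1,c_2$,
\[
c_1\,\vol(Z_0)\intl_Z \kappa\,\phi\,\sigma_Z \;=\; c_2\,\meas(Z_0)\intl_Z \phi\,\sigma_Z
\qquad\text{for all }\phi\in C^\infty_0,
\]
(using $\opa(F)=\tfrac83\kappa$ to replace $\opa(F_0)$ by a multiple of $\kappa$ in the $\meas$-variation, and expressing both integrals against the intrinsic measure $\sigma_Z$). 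Since $\meas(Z_0),\vol(Z_0)$ are fixed positive numbers, the right side is $(\text{const})\intl_Z\phi\,\sigma_Z$; the equation holds for all $\phi$ iff $\kappa$ equals that same constant $\sigma_Z$-a.e., i.e.\ iff $\kappa$ is constant on $Z$. One direction (constant $\kappa\implies$ stationary) is then immediate by choosing the constant correctly, and conversely a standard fundamental-lemma-of-the-calculus-of-variations argument forces $\kappa$ to be that constant everywhere.

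The main obstacle I anticipate is bookkeeping rather than conceptual: reconciling the purely local graph computation of $\meas'(0)$ with the global quantity $\vol'(0)$, making sure that $\sigma_Z$ (Fefferman's $(2n{-}1)$-form, with its factor $2^{2/3}\mu(F)^{1/3}\,dV$ in graph coordinates) is the measure against which both variations are naturally written, and tracking the dimensional constants so that the ``$\kappa$ is constant'' criterion comes out clean. In particular the volume variation must be pushed through the same $\sigma_Z$-normalization used for $\meas$, which means writing $\intl_B\mathring F\,dV$ in the form $\intl_B (\text{something})\cdot\mathring F\cdot\mu(F)^{1/3}\,dV$ — equivalently absorbing a factor of $\mu(F)^{-1/3}$ — and verifying that this is exactly the density that makes the Euler–Lagrange equation of the quotient reduce to $\kappa\equiv\text{const}$. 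Once that normalization is pinned down the argument is the classical ``stationary quotient $\Leftrightarrow$ numerator's integrand proportional to denominator's integrand'' computation.
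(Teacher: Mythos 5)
Your argument for the forward direction (stationary $\Rightarrow\kappa$ constant) is essentially the paper's: take $\mathring F$ compactly supported in a graph patch, note that $\delta\quo=0$ forces the proportionality
$\tfrac32\vol(Z)\,\delta\meas=\meas(Z)\,\delta\vol$, substitute the first-variation formulas, and apply the fundamental lemma to get $\opa(F_0)$ (hence $\kappa$) locally, then globally, constant. That part is sound.

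The converse, however, has a genuine gap. Your fundamental-lemma computation shows that stationarity is equivalent to $\kappa$ equalling a \emph{specific} constant, namely $\kappa\equiv\frac{3}{2^{8/3}}\frac{\meas(Z)}{\vol(Z)}$ (the value recorded in the Addendum). You then write that the implication ``constant $\kappa\implies$ stationary is immediate by choosing the constant correctly.'' But there is nothing to choose: the constants $\meas(Z)$ and $\vol(Z)$ are fixed by $Z$, and if $\kappa$ were some \emph{other} constant the stationarity identity $\tfrac32\vol\,\delta\meas=\meas\,\delta\vol$ would fail. What is actually needed — and what your proof does not supply — is an argument that whenever $\kappa$ is constant, its value is \emph{automatically} $\frac{3}{2^{8/3}}\frac{\meas(Z)}{\vol(Z)}$. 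The paper closes exactly this gap with a dilation argument: because constant $\kappa$ makes the ratio $\delta\meas/\delta\vol=\frac{2^{11/3}}{9}\kappa$ the same for all perturbations (extended from compactly-supported to global perturbations by a partition of unity), one may evaluate that ratio on the infinitesimal dilation, where $\delta\meas=\tfrac83\meas$ and $\delta\vol=4\vol$, obtaining $\frac{2^{11/3}}{9}\kappa=\frac23\frac{\meas}{\vol}$. This pins $\kappa$ at the required value and completes the converse. Without some such scaling identity, ``$\kappa$ constant'' and ``$\kappa$ equals the one stationary value'' are logically distinct conditions, and the theorem as stated would not follow.

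A secondary, more cosmetic point: both first-variation integrals in the graph model are naturally written against euclidean $dV$ on the base $B$ (this is what the paper uses), not against $\sigma_Z$. Your worry about ``pushing the volume variation through the $\sigma_Z$-normalization'' and ``absorbing a factor of $\mu(F)^{-1/3}$'' is unnecessary — keeping $dV$ as the common reference measure is what makes the fundamental-lemma step go through cleanly and is why the conclusion is $\opa(F_0)$ (not some weighted version of it) constant.
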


\begin{proof}
Suppose that $Z$  is stationary for $\quo$.  Then  the first variation of $\meas(Z)$ must vanish whenever the first variation of $\vol(Z)$ vanishes.  Working locally with $Z$ in graph form as in \S \ref{SS:FirstVarMax} we find that the first variation of $\vol(Z)$ is given by $-\intl_B \mathring F\,dV$ while the first variation of $\meas(Z)$ is given by $-\frac{2^{2/3}}{3}\intl_B 
\opa(F_0) \mathring F\,dV$. Hence 
\begin{equation}\label{E:pseudo-stat}
\intl_B 
\opa(F_0) \mathring F\,dV=0 \text{ whenever }\intl_B \mathring F\,dV=0.
\end{equation}
 It follows that $\opa(F_0)$ and hence $\kappa = \frac{3}{8} \,\opa(F_0)$ must be constant.   We note that the ratio of the first variation of $\meas(Z)$ to the first variation of $\vol(Z)$ is given by $\frac{2^{2/3}}{3} \opa(F_0) = \frac{2^{11/3}}{9}\,\kappa$.

For the converse, assume that $\kappa$ is constant.  Then reversing the above argument we see that the ratio of the first variation of $\meas(Z)$ to the first variation of $\vol(Z)$ is again given by $ \frac{2^{11/3}}{9}\,\kappa$.  (We check this first for perturbations with small support; it then follows for general perturbations by a partition of unity argument.)
Consider now perturbation of $Z$ by a family of constant-Jacobian (but not volume-preserving) holomorphic maps (dilations, for example).  
Then the first variation of $\log \quo(Z)$ must vanish; it follows that in this special case the first variation of $\log  \vol(Z)$ is equal to  $\frac{3}{2}$ times the first variation of $\log  \meas(Z)$  and hence that the  ratio of the first variation of $\meas(Z)$ to the first variation of $\vol(Z)$ is equal to $\frac{2}{3} \frac{\meas(Z)}{\vol(Z)}$.  Since this ratio is already fixed at $\frac{2^{11/3}}{9}\,\kappa$ we see that the ratio is equal to $\frac{2}{3} \frac{\meas(Z)}{\vol(Z)}$ for {\em any} perturbation. Reversing our reasoning we see that the first variation of $\log \quo(Z)$ must vanish in general; that is, $Z$  is stationary for $\quo$.
\end{proof}

\begin{Add1}  When $Z$  is stationary for $\quo$ then $\kappa=\frac{3}{2^{8/3}} \frac{\meas(Z)}{\vol(Z)}$ must be constant and {\em positive}.
\end{Add1}

We note that the condition \eqref{E:pseudo-stat} makes sense even when $Z$ is not the boundary of a bounded domain (provided that we restrict attention to compactly supported perturbations).  We can still view such hypersurfaces as being stationary for $\quo$ even through $\quo$ itself is not defined. With this more lenient interpretation of the problem we again conclude that  $Z$  is stationary for $\quo$ if and only if $\kappa$ is constant, though now it is possible for $\kappa$ to be negative.  

Examples of hypersurfaces with constant $\kappa$ include spheres
$|z|^2+|w|^2=R^2$ (with $\kappa=3\cdot 2^{-1/3}R^{-4/3}$)
and (non-compact) hypersurfaces of the form $|z|^2-|w|^2=R^2$ (with $\kappa=-3\cdot 2^{-1/3}R^{-4/3}$).

\begin{Thm}\label{T:Li}
Let $Z\subset\C^2$ be the compact strongly pseudoconvex boundary of a bounded domain. Then $Z$  is stationary for $\quo$ if and only if the domain bounded by $Z$ is a constant-Jacobian biholomorphic image of the unit ball.
\end{Thm}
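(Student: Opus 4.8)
The plan is to prove the two implications separately; the forward implication is essentially formal, while the converse reduces, via Theorem~\ref{T:FirstVarQuoKappa}, to a global rigidity statement in pseudo-Hermitian geometry.

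Suppose first that the domain bounded by $Z$ is $H(\mathbb B)$ for a biholomorphic $H$ with $\det H'$ constant. It suffices to know that $\partial\mathbb B$ is itself stationary for $\quo$: any perturbation $Z_\eps$ of $H(\partial\mathbb B)$ pulls back to a perturbation $H\inv(Z_\eps)$ of $\partial\mathbb B$ with $\quo(Z_\eps)=\quo\bigl(H\inv(Z_\eps)\bigr)$, by the invariance recorded in \S\ref{SS:IsoQuo}, so the first variation of $\quo$ at $H(\partial\mathbb B)$ vanishes. And $\partial\mathbb B$ is stationary for $\quo$ by Theorem~\ref{T:FirstVarQuoKappa}, since there $\kappa\equiv 3\cdot2^{-1/3}$ is constant.

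Conversely, assume $Z$ is stationary for $\quo$. By Theorem~\ref{T:FirstVarQuoKappa} and its Addendum, $\kappa$ is a positive constant on $Z$. Composing with a dilation $(z,w)\mapsto(\lambda z,\lambda w)$ --- a constant-Jacobian map under which $\kappa$ is rescaled by the positive factor $|\lambda|^{-4/3}$ (as one reads off from the transformation law for $\sigma_Z$ and the value of $\kappa$ on round spheres) --- we may normalize so that $\kappa\equiv 3\cdot2^{-1/3}$, the value on the unit sphere; it then suffices to prove that the domain $\Omega$ bounded by this normalized $Z$ is a constant-Jacobian image of $\mathbb B$. Now, as indicated in \S\ref{SS:ChrisThesis} and pinned down in \S\ref{SS:web-curv}, $\kappa$ is a fixed positive multiple of the Webster scalar curvature of the pseudo-Hermitian structure $(J_Z,\theta_Z)$ that Fefferman's construction attaches to $Z$. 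Thus $Z$ is a compact, embedded --- hence embeddable --- strongly pseudoconvex CR $3$-manifold bounding a bounded domain in $\C^2$ and carrying a globally defined contact form whose Webster scalar curvature is the positive constant occurring for the round sphere. Compactness is indispensable here: the examples following the Addendum are non-compact hypersurfaces with constant positive $\kappa$ that are not constant-Jacobian images of $\mathbb B$. The substantive input --- which I would import from [Li] --- is the rigidity statement that such a $Z$ is necessarily CR-equivalent to the standard sphere $(S^3,J_{\mathrm{std}})$, via an equivalence that moreover pulls $\theta_Z$ back to the standard contact form $\theta_{\mathrm{std}}$. I expect this to be the main obstacle: constant Webster scalar curvature is far from forcing sphericity for a general compact CR $3$-manifold, so the argument must use the embeddedness of $Z$ --- e.g.\ through the nonnegativity of the CR Paneitz operator in the embedded case --- together with a Reilly/Obata-type identity whose equality case produces the isometry.

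It remains to upgrade the isometry to a constant-Jacobian biholomorphism. Let $\Phi\colon(S^3,\theta_{\mathrm{std}})\to(Z,\theta_Z)$ be the pseudo-Hermitian isometry just obtained. It preserves $\theta\w d\theta$, hence --- with the normalization of $\theta$ in Fefferman's construction --- Fefferman's form: $\Phi^{*}\sigma_Z=\sigma_{\partial\mathbb B}$. Since both $\mathbb B$ and $\Omega$ are bounded with strongly pseudoconvex boundary, $\Phi$ extends to a biholomorphism $\tilde\Phi\colon\mathbb B\to\Omega$ (the boundary map extends holomorphically across $\partial\mathbb B$ by the reflection principle, and the extension propagates to a global biholomorphism by the Hartogs extension phenomenon together with the maximum principle; cf.\ [Fef1] and the subsequent literature). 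Combining $\Phi^{*}\sigma_Z=\sigma_{\partial\mathbb B}$ with the transformation law $\tilde\Phi^{*}\sigma_Z=\bigl|\det\tilde\Phi'\bigr|^{4/3}\sigma_{\partial\mathbb B}$ gives $\bigl|\det\tilde\Phi'\bigr|\equiv1$ on $\partial\mathbb B$; applying the maximum principle to the nonvanishing holomorphic functions $\det\tilde\Phi'$ and $1/\det\tilde\Phi'$ on $\mathbb B$ then forces $\bigl|\det\tilde\Phi'\bigr|\equiv1$ on $\overline{\mathbb B}$, so $\det\tilde\Phi'$ is constant. Hence $\Omega$ is a constant-Jacobian biholomorphic image of $\mathbb B$; undoing the dilation normalization gives the same conclusion for the original $Z$.
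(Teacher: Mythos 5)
Your forward implication matches the paper's: invariance of $\quo$ under constant-Jacobian biholomorphism plus the fact that $\kappa$ is constant on the sphere. For the converse, the paper simply combines Theorem~\ref{T:FirstVarQuoKappa} with Corollary~1.2 of [Li], which as quoted already asserts that a compact $Z$ with constant positive $\kappa$ bounds a constant-Jacobian biholomorphic image of the unit ball --- so the paper's argument ends there. You instead unpack what Li presumably proves: you normalize $\kappa$ by a dilation, import from [Li] a pseudo-Hermitian isometry of $(Z,\theta_Z)$ with the standard sphere, extend it to a biholomorphism $\tilde\Phi$ of the domains via the Fefferman extension phenomenon, and then use the transformation law for $\sigma$ together with the maximum principle to show $\det\tilde\Phi'$ is constant. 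This reconstruction is mathematically sound and arguably more illuminating (the final constant-Jacobian step is a nice direct argument), but it is also work the paper delegates entirely to [Li]; if Li's Corollary~1.2 is indeed stated in the form the paper quotes, your intermediate CR-isometry, extension, and maximum-principle steps are redundant rather than supplying a genuinely different route. One cautionary note: your appeal to ``CR-equivalent with $\theta_Z$ pulled back to $\theta_{\mathrm{std}}$'' presupposes that the rigidity statement produces not merely a CR diffeomorphism but a genuine pseudo-Hermitian isometry for the Fefferman-normalized contact forms; this is what an Obata-type uniqueness argument gives in the constant-curvature equality case, so the claim is defensible, but you are sketching the content of Li's proof rather than merely citing its conclusion, and should make that dependence explicit.
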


\begin{proof}  The fact that constant-Jacobian holomorphic images of the unit sphere are stationary for $\quo$ follows from the invariance properties of our problem along with the fact that the sphere has  constant $\kappa$.  

For the converse we simply combine Theorem \ref{T:FirstVarQuoKappa} with  Corollary 1.2 from [Li] (refer also to \S \ref{SS:web-curv} below) which implies that any compact $Z$ with  constant positive $\kappa$ must bound a constant-Jacobian biholomorphic image of the unit ball.\end{proof}

\begin{Remark}
See [Ham] for {\em local} results characterizing constant-Jacobian holomorphic images of the unit ball.
\end{Remark}

\begin{Remark}
The result in Theorem \ref{T:Li} (or, more precisely, the result of Li quoted above) is reminiscent of Alexandrov's theorem [Ale] stating that a compact connected embedded hypersurface in $\R^n$ with constant mean curvature is a geometric sphere.  (Recall that a hypersurface is stationary for the euclidean isoperimetric problem if and only if it has constant mean curvature -- see for example II.1.3 in [Cha].) Wente [Wen] showed that this result fails for immersed hypersurfaces in $\R^3$ -- in fact, there exists an immersed torus in $\R^3$ with constant mean curvature.  This leads to the following question.

\begin{Q}
Let $Z$ be a compact immersed strongly pseudoconvex hypersurface in $\C^2$ that is stationary for $\quo$.  Does it follow that $Z$ must be the image of the unit sphere by a map extending to a constant-Jacobian holomorphic (but not necessarily injective) map of the unit ball?
\end{Q}

\end{Remark}

\subsection{Second variation for the sphere}
\label{S:2VarSphere}

Let
\begin{equation*}
\eta=\frac14\left(-\bar w\,dz\w dw\w d\bar z
+\bar z \, dz\w dw\w d\bar w
-w\,dz\w d\bar z\w d\bar w
+z\, dw\w d\bar z\w d\bar w\right).
\end{equation*}
Then $\eta\w d\rho =(S\rho)\omega_{\C^2}$, where $S$ is the radial vector field 
\begin{equation*}
S=z\frac{\dee}{\dee z} + w\frac{\dee}{\dee w} 
+\bar z\frac{\dee}{\dee \bar z} 
+\bar w\frac{\dee}{\dee\bar w}. 
\end{equation*}
Thus from \eqref{E:FeffMeasForm} we have
\begin{equation} \label{E:FeffStar1}
\sigma_Z
=
2^{4/3} \frac{M(\rho)^{1/3}}{S\rho}\eta.
\end{equation}

Suppose that $\rho$ takes the form
\begin{equation*}
\rho(z,w) = F(z,w)\left(|z|^2+|w|^2\right)-1
\end{equation*}
where $F$ is a radial function satisfying $SF\equiv 0$.
(Any star-shaped hypersurface admits a defining function of this form.) Then we may rewrite \eqref{E:FeffStar1} as
\begin{equation*} 
\sigma_Z
=
2^{1/3} \frac{M(\rho)^{1/3}}{F(z,w)\left(|z|^2+|w|^2\right)}\,\eta
=2^{1/3}M(\rho)^{1/3}\,\eta;
\end{equation*}
after checking that $M(\rho)$ is homogeneous of degree two we find that
\begin{align} \label{E:FeffStar2}
\meas(Z)
&=
2^{1/3} \intl_Z M(\rho)^{1/3}\,\eta\notag\\
&=
2^{1/3} \intl_{Z_0} F^{-7/3} M(\rho)^{1/3}\eta,
\end{align}
where $Z_0$ is the unit sphere.  (Note that $\eta$ restricts to euclidean surface area on $Z_0$.)

In place of the vector fields \eqref{E:VFHeis} we 
use
\begin{align} \label{E:BallVFDefs}
L &= \bar w \frac{\dee}{\dee z} -\bar z \frac{\dee}{\dee w}\notag\\
\bar L &= w \frac{\dee}{\dee \bar z} - z\frac{\dee}{\dee  \bar w}\\
T &=i\left( z \frac{\dee}{\dee z} + w \frac{\dee}{\dee  w}-
\bar z \frac{\dee}{\dee \bar z} - \bar w \frac{\dee}{\dee  \bar w}\right)\notag
\end{align}
which are divergence-free, tangent to the sphere and satisfy
\begin{align} \label{E:BallVFRelations}
L\bar L - \bar L L &= -i T\notag\\
LT-TL&=2iL\\
\bar LT-T\bar L &= -2i\bar L.\notag
\end{align}

Set $F=e^G$.

\begin{Lem}\label{L:MASph} 
On $Z_0$ we have
\begin{align}\label{E:MASph}
M(\rho)
= e^{3G}\Bigg( 1 &+ \frac{(L\bar L+ \bar LL) G}{2} \notag\\
&+\frac{1}{4} (TG)^2 + \Im(\bar L G \cdot LTG)\\
&+\frac{1}{4}
\biggl(
(TG)^2 \cdot \frac{(L\bar L+ \bar LL) G}{2} 
+ LG\cdot \bar L G\cdot T^2G\notag\\
&\qquad\qquad-2 \Re \left(\bar L G\cdot TG  \cdot LTG\right)
\biggr)
\Bigg).\notag
\end{align}
\end{Lem}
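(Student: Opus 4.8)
The plan is to obtain \eqref{E:MASph} by expanding the bordered‑Hessian determinant \eqref{E:MADef} for $\rho = e^{G}(|z|^{2}+|w|^{2}) - 1$ and rewriting the result in terms of $L,\bar L,T$. The first move is to peel off the factor $e^{3G}$. Since a constant shift leaves $M$ unchanged, $M(\rho) = M(\psi)$ with $\psi = e^{G}(|z|^{2}+|w|^{2}) = e^{\phi}$, $\phi := G + \log(|z|^{2}+|w|^{2})$; and the elementary column operations $C_{j}\mapsto C_{j} - \phi_{z_{j}}C_{0}$ on the bordered Hessian of $e^{\phi}$ cancel the rank‑one terms $\phi_{z_{j}}\phi_{\bar z_{k}}$ in $(e^{\phi})_{z_{j}\bar z_{k}} = e^{\phi}(\phi_{z_{j}\bar z_{k}}+\phi_{z_{j}}\phi_{\bar z_{k}})$, yielding the pointwise identity $M(e^{\phi}) = e^{3\phi}M(\phi)$. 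Hence $M(\rho) = e^{3G}(|z|^{2}+|w|^{2})^{3}M(\phi)$, which on $Z_{0}$ becomes $M(\rho) = e^{3G}\,M\!\bigl(G + \log(|z|^{2}+|w|^{2})\bigr)$, so it remains to identify the parenthesized expression in \eqref{E:MASph} with $M\!\bigl(G+\log(|z|^{2}+|w|^{2})\bigr)$ restricted to $|z|^{2}+|w|^{2}=1$. (Alternatively one can expand $M(\rho)$ directly and carry $e^{3G}$ along; the above is just tidier.)

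Write $\ell = \log(|z|^{2}+|w|^{2})$. For $n=2$ the determinant $M(\varphi)$ is bilinear in the first derivatives $\varphi_{z_{j}},\varphi_{\bar z_{k}}$ and linear in the complex Hessian $\varphi_{z_{j}\bar z_{k}}$ — explicitly $M(\varphi) = \varphi_{z}\varphi_{\bar z}\varphi_{w\bar w} - \varphi_{z}\varphi_{w\bar z}\varphi_{\bar w} - \varphi_{w}\varphi_{\bar z}\varphi_{z\bar w} + \varphi_{w}\varphi_{z\bar z}\varphi_{\bar w}$, with the coefficient matrix being the adjugate of the Hessian — and, as the $2\times2$ adjugate is linear, $M(G+\ell)$ expands into eight terms sorted by whether each of the three factors comes from $G$ or from $\ell$. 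The all‑$\ell$ term is $M(\ell)$, which a short computation shows equals $1$ on $Z_{0}$; this is the leading term in \eqref{E:MASph}. In the other seven I would substitute the elementary boundary values $\ell_{z} = \bar z$, $\ell_{w} = \bar w$, $\ell_{z\bar z} = 1-|z|^{2}$, $\ell_{w\bar w} = 1-|w|^{2}$, $\ell_{z\bar w} = -\bar z w$, $\ell_{w\bar z} = -z\bar w$ (and conjugates), which leaves a polynomial in $z,w,\bar z,\bar w$ (subject to $|z|^{2}+|w|^{2}=1$) and in the first and second Cartesian derivatives of $G$.

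The last step is the change of frame. On $Z_{0}$ the coefficient matrix with rows $(\bar w,\,-\bar z)$ and $(z,\,w)$ has determinant $1$, so $L$ together with the holomorphic radial field $z\partial_{z}+w\partial_{w}$ inverts to recover $\partial_{z},\partial_{w}$; using $SG\equiv0$ to write the radial part as $-\tfrac{i}{2}TG$ gives $G_{z} = wLG - \tfrac{i}{2}\bar z\,TG$, $G_{w} = -zLG - \tfrac{i}{2}\bar w\,TG$, and conjugate formulas for $G_{\bar z},G_{\bar w}$. For the second derivatives I would iterate this, commuting $L,\bar L,T$ past the coordinate coefficients (using that $L,\bar L$ kill $|z|^{2}+|w|^{2}$) and past one another via \eqref{E:BallVFRelations}. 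The non‑commutativity is exactly what generates the lower‑order corrections: $L\bar L - \bar L L = -iT$ turns the symmetrized Hessian contribution into $\tfrac12(L\bar L+\bar L L)G$ plus a $TG$‑term, and one must keep $LTG$ and $TLG$ distinct (they differ by $2iLG$), which is where the $\operatorname{Re}$ and $\operatorname{Im}$ combinations in \eqref{E:MASph} come from. Collecting everything and using that $G$ is real should give the four displayed groups of terms.

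I expect the main obstacle to be precisely this bookkeeping: the eight‑term expansion combined with the repeated non‑commuting substitutions for the second derivatives is a long and delicate computation, and the exact coefficients ($\tfrac12$, $\tfrac14$) and the particular real/imaginary groupings only surface after systematic cancellation — this is evidently one of the computations the authors verified by machine.
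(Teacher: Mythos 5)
Your argument is correct, but it takes a genuinely more structured route than the paper's. The published proof simply asserts, as ``a lengthy but routine computation'' (one of the identities the authors note in \S\ref{S:Intro} they checked by machine), a formula for $M(\rho)$ that keeps all radial terms --- $SG$, $S^2G$, $LSG$, and $(S\pm iT)G$ combinations --- and then sets $SG\equiv 0$ at the very end to obtain \eqref{E:MASph}. You instead impose $SG\equiv 0$ from the outset via $S'G = -\tfrac{i}{2}TG$ (with $S'=z\partial_z+w\partial_w$), and you pre-organize the computation into three digestible pieces: (i) the column-operation identity $M(e^{\phi})=e^{(n+1)\phi}M(\phi)$ extracts the prefactor $e^{3G}$ with no algebra on derivatives of $G$; (ii) the trilinearity of $M(\varphi)=\sum_{j,k}\varphi_{z_j}\varphi_{\bar z_k}(\operatorname{adj}H)_{kj}$ (with $H$ the complex Hessian) reduces $M(G+\ell)$ to eight pieces, the all-$\ell$ piece being $M(\ell)=(|z|^2+|w|^2)^2=1$ on $Z_0$; (iii) the frame inversion $\partial_z = w L + \bar z S'$, $\partial_w = -z L + \bar w S'$ on $Z_0$, iterated for second derivatives and commuted via \eqref{E:BallVFRelations}, converts Cartesian derivatives of $G$ to the intrinsic $L,\bar L,T$ expressions. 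This organization is a real conceptual gain over opaque brute force, although, as you say yourself, the bulk of the bookkeeping still sits in the second-order part of step (iii). What you give up relative to the paper is the extra generality of its intermediate identity: by substituting $S'G=-\tfrac i2 TG$ early you never see the $SG$-, $S^2G$-, and $LSG$-terms that the paper's displayed formula records, which carries independent information for star-shaped hypersurfaces whose defining function has not been normalized to make $G$ homogeneous of degree zero.
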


\begin{proof}
A lengthy but routine computation serves to verify that
\begin{align*}
M(\rho)
= e^{3G}\Bigg( 1 &+ \frac{(L\bar L+ \bar LL) G}{2} \notag\\
&+\frac{1}{4} (TG)^2 + \Im(\bar L G \cdot LTG)\\
&+\frac{1}{4}
\left(
(TG)^2 \cdot \frac{(L\bar L+ \bar LL) G}{2} 
+ LG\cdot \bar L G\cdot T^2G\right)
\\
&-\frac{1}{2}\Re \left(\bar LG\cdot (S-iT)G\cdot L(S+iT)G\right)\\
&+SG
\biggl(
\frac32 + \frac34 SG + \frac{(L\bar L+ \bar LL) G}{2}
+\frac18 SG \cdot (L\bar L+ \bar LL) G \\
&\qquad\qquad + \frac18 (SG)^2 + \frac18 (TG)^2
\biggr)\\
&+ \frac14 S^2G\cdot LG \cdot \bar LG
-\Re \left(\bar LG\cdot LSG\right)\Bigg).
\end{align*}
Recalling that $SG=0$ we obtain \eqref{E:MASph}.
\end{proof}

Since only tangential derivatives are cited in \eqref{E:MASph},  the formula only uses the values of  $G$ along $Z_0$.

Setting $G=\eps \mathring{G}$ and applying \eqref{E:FeffStar2} we obtain
\begin{align*}
&\meas(Z_\eps) 
=  
2^{1/3} \intl_{Z_0}
\Bigg( 
1 + \frac{\eps}{3}\left(\tfrac{(L\bar L+ \bar LL) \mathring{G}}{2} - 4\mathring{G}\right)\\
&+ \frac{\eps^2}{36}\Big(
3 |T\mathring{G}|^2 + 12 \Im \bar L\mathring{G}\cdot LT\mathring{G}
-2 |L\bar L \mathring{G}|^2
-2\Re (L\bar L\mathring{G})^2
-16 \mathring{G} \tfrac{(L\bar L+ \bar LL) \mathring{G}}{2}
+32 \mathring{G}^2
\Big)
\Bigg)
\,\eta\notag\\
&+O(\eps^3).\notag
\end{align*}
Using integration by parts along with \eqref{E:BallVFRelations} we find that
\begin{align}\label{E:Parts}
\intl_{Z_0}  \tfrac{(L\bar L+ \bar LL) \mathring{G}}{2} \,\eta 
&= 0\notag\\
2\Im \intl_{Z_0} \bar L\mathring G\cdot LT\mathring G\,\eta
&= i \intl_{Z_0} \left( L\bar L-\bar L L\right) \mathring G\cdot T\mathring G\,\eta
= \intl_{Z_0} |T\mathring G|^2 \,\eta\notag\\
2 \Re \intl_{Z_0} (L\bar L\mathring{G})^2\,\eta
&= 2 \intl_{Z_0} |L\bar L\mathring G|^2 \,\eta
-i \intl_{Z_0} \left( L\bar L-\bar L L\right)\mathring G\cdot T\mathring G \,\eta\\
&= 2 \intl_{Z_0} |L\bar L\mathring G|^2 \,\eta
- \intl_{Z_0} |T\mathring G|^2 \,\eta \notag\\
\intl_{Z_0} \mathring{G} \tfrac{(L\bar L+ \bar LL) \mathring{G}}{2} \,\eta 
&= -\intl_{Z_0}  \left| L\mathring G\right|^2 \,\eta\notag
\end{align}
and so
\begin{multline}\label{E:2VarFeffBall}
\meas(Z_\eps) 
=  \meas(Z_0) 
-
\frac{\eps \,2^{7/3}}{3} 
\intl_{Z_0}
\mathring{G} \,\eta\\
+
\frac{\eps^2}{9\cdot 2^{2/3}} 
\intl_{Z_0}
\left(
5|T\mathring{G}|^2 - 2 |L\bar L\mathring{G}|^2+8|L\mathring{G}|^2+16 \mathring{G}^2
\right)
\,\eta
+O(\eps^3).
\end{multline}

Using spherical coordinates we find that 
\begin{equation}\label{E:VolInt}
\vol(Z)=\frac{1}{4}\intl_{Z_0} F^{-2}\,\eta,
\end{equation}
 yielding
\begin{equation}\label{E:2VarVolBall}
\vol(Z_\eps)
=
\vol(Z_0)
- \frac{\eps}{2} \intl_{Z_0}
\mathring{G} \,\eta
+ \frac{\eps^2}{2} \intl_{Z_0}
\mathring{G}^2 \,\eta
+O(\eps^3).
\end{equation}

We can combine \eqref{E:2VarFeffBall} and \eqref{E:2VarVolBall} to obtain an expansion for $\quo(Z_\eps)$.  The result is simpler if we assume that $\intl_{Z_0}
\mathring{G} \,\eta=0$, in which case we obtain the following (with  help  from  \eqref{E:Parts}).
\begin{align}\label{E:2QVarVolBall}
\quo(Z_\eps)
&=
8\pi
+
\frac{\eps^2}{3\pi}
\intl_{Z_0}
\left(
5|T\mathring{G}|^2 - 2 |L\bar L\mathring{G}|^2+8|L\mathring{G}|^2-8 \mathring{G}^2
\right)
\,\eta
+O(\eps^3)\notag\\
&=
8\pi
+
\frac{\eps^2}{3\pi}
\intl_{Z_0}
\left(
5|T\mathring{G}|^2 - 2 |L\bar L\mathring{G}+2\mathring{G}|^2
\right)
\,\eta
+O(\eps^3)\\
&=
8\pi
+
\frac{\eps^2}{3\pi}
\intl_{Z_0}
\left(
\frac{9}{2}|T\mathring{G}|^2 
- 2 |\tfrac{(L\bar L+ \bar LL) \mathring{G}}{2} +2\mathring{G}|^2
\right)
\,\eta
+O(\eps^3).\notag
\end{align}

\begin{Lem}\label{L:Fourier} Let $U$ denote the  unit ball  in $\C^2$ centered at 0.  Then
\begin{align*}
\intl_{U} |z|^{2a}|w|^{2b}\,dV&=
\pi^2\frac{a!b!}{(a+b+2)!}\\
\intl_{Z_0} |z|^{2a}|w|^{2b}\,\eta&=
2\pi^2\frac{a!b!}{(a+b+1)!}.
\end{align*}
\end{Lem}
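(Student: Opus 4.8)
The plan is to evaluate the two integrals directly by reducing them to one-dimensional beta-type integrals and then invoking the classical identity $\int_0^1 t^{a}(1-t)^{b}\,dt = \dfrac{a!\,b!}{(a+b+1)!}$ (the Euler beta function at integer arguments).

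\medskip

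First I would handle the ball integral $\int_U |z|^{2a}|w|^{2b}\,dV$. Writing $z = r_1 e^{i\theta_1}$, $w = r_2 e^{i\theta_2}$, the euclidean volume form on $\C^2 \cong \R^4$ becomes $dV = r_1 r_2\, dr_1\, dr_2\, d\theta_1\, d\theta_2$, and the domain is $\{r_1,r_2 \ge 0,\ r_1^2 + r_2^2 < 1\}$. The angular integrations each contribute $2\pi$, leaving $4\pi^2 \int\int_{r_1^2+r_2^2<1} r_1^{2a+1} r_2^{2b+1}\, dr_1\, dr_2$. Substituting $s = r_1^2$, $t = r_2^2$ (so $r_i\,dr_i = \tfrac12 ds$, etc.) turns this into $\pi^2 \int\int_{s+t<1,\ s,t>0} s^a t^b\, ds\, dt$. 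The inner integral over $t \in (0, 1-s)$ gives $\frac{(1-s)^{b+1}}{b+1}$, and then $\int_0^1 s^a (1-s)^{b+1}\, ds = \frac{a!\,(b+1)!}{(a+b+2)!}$ by the beta integral; multiplying by $\frac{1}{b+1}$ and $\pi^2$ yields $\pi^2 \frac{a!\,b!}{(a+b+2)!}$, as claimed.

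\medskip

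Next, for the sphere integral, the cleanest route is to relate it to the ball integral already computed rather than to re-parametrize $Z_0$ and unwind $\eta$. Since $\eta$ restricts to euclidean surface area on $Z_0$ (as noted in the excerpt just before \eqref{E:BallVFDefs}), I would use the coarea/polar decomposition: for a function $f$ homogeneous of degree $d$ on $\C^2$, integrating $f$ over the ball of radius $R$ in polar form gives $\int_{|z|^2+|w|^2 < R^2} f\, dV = \int_0^R \rho^{d + 3}\left(\int_{Z_0} f\, \eta\right) d\rho = \frac{R^{d+4}}{d+4}\int_{Z_0} f\,\eta$, using that $Z_0$ is three-dimensional and $dV = \rho^3\, d\rho\, d(\text{area})$. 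Applying this with $f = |z|^{2a}|w|^{2b}$, which is homogeneous of degree $d = 2a + 2b$, and $R = 1$, gives $\int_U |z|^{2a}|w|^{2b}\, dV = \frac{1}{2a+2b+4}\int_{Z_0} |z|^{2a}|w|^{2b}\,\eta$. Solving for the sphere integral and substituting the ball value: $\int_{Z_0} |z|^{2a}|w|^{2b}\,\eta = (2a+2b+4)\cdot \pi^2\frac{a!\,b!}{(a+b+2)!} = 2\pi^2 \frac{a!\,b!}{(a+b+1)!}$, since $\frac{2(a+b+2)}{(a+b+2)!} = \frac{2}{(a+b+1)!}$.

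\medskip

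There is no real obstacle here — both identities are elementary once the right substitutions are in place. The only point requiring a little care is the bookkeeping in the polar decomposition of $dV$ on $\C^2$: one must correctly track the power of $\rho$ (degree $3$ from the radial direction in $\R^4$) and confirm that $\eta|_{Z_0}$ is exactly the induced surface measure with no extra constant, which the excerpt has already asserted. Alternatively, if one prefers not to rely on that assertion, the sphere integral can be computed independently by the same angular-plus-radial substitution used for the ball, with the constraint $r_1^2 + r_2^2 = 1$ parametrized by $r_1 = \cos\phi$, $r_2 = \sin\phi$; this gives $\int_{Z_0}|z|^{2a}|w|^{2b}\,\eta = 4\pi^2 \int_0^{\pi/2} \cos^{2a+1}\phi \,\sin^{2b+1}\phi\, d\phi = 2\pi^2 B(a+1,b+1) = 2\pi^2\frac{a!\,b!}{(a+b+1)!}$, matching the first approach.
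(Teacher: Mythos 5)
Your proof is correct and takes essentially the same approach as the paper: polar coordinates in each complex variable for the ball integral, then passing to the sphere integral via the radial structure of a homogeneous integrand (the paper phrases this as ``differentiating with respect to $R$'', which is the same thing your coarea decomposition makes explicit). Your alternative direct parametrization of $Z_0$ is a fine independent check but not needed.
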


\begin{proof}
Using polar coordinates in each variable we obtain \[\intl_{B(0,R)} |z|^{2a}|w|^{2b}\,dV=
\pi^2\frac{a!b!}{(a+b+2)!}R^{2(a+b+2)}.\]  Differentiating with respect to $R$ we obtain the corresponding spherical integrals.
\end{proof}

\begin{Ex}\label{E:A} 
 Pick $(j,k)\ne(0,0)$ and let $\mathring G$ agree with $z^j w^k+\bar z^j \bar w^k$ along $Z_0$.    Using  \eqref{E:2QVarVolBall} and Lemma \ref{L:Fourier}  we have
\begin{equation*}
\quo(Z_\eps) 
=  8\pi
+
\eps^2\,\frac{16\pi}{3}
\frac{j! k!}{(j+k+1)!}
(j+k+2)(j+k-1)
+O(\eps^3).
\end{equation*}
So the second variation is positive when $j+k\ge2$ but 
vanishes for $(j,k)=(1,0)$ or $(0,1)$.
\end{Ex}

\begin{Ex}\label{E:B}  Pick $(j,k)$ with $j,k\ge1$ and let 
$\mathring G(z,w)$ agree with $z^j \bar w^k+\bar z^j  w^k$ along $Z_0$.   Computing as above we have
\begin{equation*}
\quo(Z_\eps) 
=  8\pi 
-\eps^2\,\frac{8\pi}{3}
\frac{j! k!}{(j+k+1)!}
(j+2)(j-1)(k+2)(k-1)
+O(\eps^3).
\end{equation*}
So the second variation is negative when both $j$ and $k$ are $\ge 2$ but vanishes when $j$ or $k$ is equal to $1$. 
\end{Ex}

Orthogonality considerations reveal that the second variation form is diagonal on the span of the functions examined in Examples \ref{E:A} and \ref{E:B}.
Thus we have established the following.

\begin{Thm} \label{T:Saddle}
The sphere $Z_0$ is a  saddle point for $\quo$. 
\end{Thm}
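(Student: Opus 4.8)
The plan is to read the conclusion off directly from the second-variation data already assembled in \eqref{E:2QVarVolBall} together with Examples~\ref{E:A} and~\ref{E:B}. First recall that $Z_0$ is stationary for $\quo$: this is immediate from Theorem~\ref{T:FirstVarQuoKappa}, since the sphere has constant $\kappa$, and is also visible directly by combining the first-order terms of \eqref{E:2VarFeffBall} and \eqref{E:2VarVolBall} (the first variations of $\log\meas$ and $\log\vol$ are each proportional to $\intl_{Z_0}\mathring G\,\eta$, with ratio $3/2$, so the first variation of $\log\quo$ vanishes identically). To call such a critical point a \emph{saddle point} it then suffices to show that the second-variation form is indefinite on the space of nearby (star-shaped radial-graph) hypersurfaces --- i.e.\ to exhibit one admissible perturbation along which $\quo$ strictly increases to second order and one along which it strictly decreases.

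Both are already in hand. For the increasing direction, let $\mathring G$ agree on $Z_0$ with $z^jw^k+\bar z^j\bar w^k$ for some $(j,k)$ with $j+k\ge2$, say $(j,k)=(1,1)$; Example~\ref{E:A} gives
\[
\quo(Z_\eps)=8\pi+\eps^2\,\frac{16\pi}{3}\,\frac{j!\,k!}{(j+k+1)!}\,(j+k+2)(j+k-1)+O(\eps^3),
\]
whose $\eps^2$-coefficient is strictly positive. For the decreasing direction, let $\mathring G$ agree with $z^j\bar w^k+\bar z^jw^k$ for some $j,k\ge2$, say $(j,k)=(2,2)$; Example~\ref{E:B} gives
\[
\quo(Z_\eps)=8\pi-\eps^2\,\frac{8\pi}{3}\,\frac{j!\,k!}{(j+k+1)!}\,(j+2)(j-1)(k+2)(k-1)+O(\eps^3),
\]
with strictly negative $\eps^2$-coefficient. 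One should note that both $\mathring G$ satisfy the normalization $\intl_{Z_0}\mathring G\,\eta=0$ under which \eqref{E:2QVarVolBall} was derived: invariance of $Z_0$ under the rotations $z\mapsto e^{i\theta}z$ and $w\mapsto e^{i\phi}w$ forces $\intl_{Z_0}z^jw^k\,\eta=\intl_{Z_0}z^j\bar w^k\,\eta=0$ whenever $(j,k)\ne(0,0)$ (consistent with Lemma~\ref{L:Fourier}), so the hypothesis is automatic for these perturbations.

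Producing perturbations of $Z_0$ along which $\quo$ is strictly larger, respectively strictly smaller, to second order --- together with stationarity --- shows that $Z_0$ is neither a local maximum nor a local minimum of $\quo$, which is the assertion of Theorem~\ref{T:Saddle}; indeed, since the second-variation form is diagonal on the span of the functions of Examples~\ref{E:A} and~\ref{E:B}, both its positive and its negative subspace are infinite-dimensional. There is no serious obstacle here beyond bookkeeping, the substantive work (the expansion \eqref{E:2QVarVolBall} and its evaluation on the two families of spherical harmonics) being already complete; the only points needing a moment's care are the reduction to the normalized case $\intl_{Z_0}\mathring G\,\eta=0$ and the elementary check that the two chosen perturbations avoid the degenerate directions where the second variation vanishes.
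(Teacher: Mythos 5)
Your proposal is correct and follows the paper's argument essentially verbatim: the paper likewise establishes Theorem~\ref{T:Saddle} by citing Examples~\ref{E:A} and~\ref{E:B} (which exhibit perturbations with strictly positive and strictly negative second variation, respectively) together with the remark that the second-variation form is diagonal on the span of those trial functions. Your added observations --- that the normalization $\intl_{Z_0}\mathring G\,\eta=0$ is automatic by rotation invariance, and that one should verify the chosen perturbations avoid the degenerate directions --- are sensible but do not change the route.
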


For $\mathring G(z,w)=z \bar w+\bar z  w$ we have $L\bar L\mathring{G}+2\mathring{G}=0=T\mathring{G}$.  Thus $\mathring{G}$ is a null function for the polarization of the second variation form, showing that the second variation form is degenerate. Note also that the computations above provide explicit infinite-dimensional subspaces along which the second variation form is positive definite or negative definite.

From Theorems \ref{T:Li} and \ref{T:Saddle} and the invariance properties of the problem we see that all critical points for $\quo$ are saddle points.

\subsection{Special families of domains}

\subsubsection{Circular hypersurfaces}  A {\em circular hypersurface} is a hypersurface intersecting each complex line through the origin in a circle centered at the origin.  A circular hypersuface $Z\subset \C^2$ admits a defining function  $e^{G(z,w)}\left(|z|^2+|w|^2\right)-1$ with $SG=TG=0$ where $S$ and $T$ are the vector fields from \S \ref{S:2VarSphere}.

From \eqref{E:2QVarVolBall} we see that the second variation for $\quo$ is negative semi-definite when restricted to circular perturbations of the unit sphere.  It turns out that the corresponding global result also holds.

\begin{Thm}\label{T:Circ}
For $Z$ a strongly pseudoconvex circular hypersurface we have
\begin{equation}\label{E:IsoperCirc}
\quo(Z)\le 8\pi 
\end{equation}
with equality holding if and only if $Z$ is the $\C$-linear image of a sphere.
\end{Thm}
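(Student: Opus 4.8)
The plan is to exploit the circular structure to reduce the problem to a one-dimensional integral inequality and then recognize it as (a form of) the classical planar isoperimetric inequality. For a circular hypersurface $Z$ with defining function $\rho = e^{G}(|z|^2+|w|^2)-1$ where $SG=TG=0$, the function $G$ descends to a function on $\CP^1$; equivalently, writing things in terms of the affine coordinate $t=w/z$ on a complex line, $G$ depends only on $|t|$ (or, after the $\C$-linear action, one reduces to studying $G$ as a function of a single real variable via a further rotational symmetry). First I would substitute the circular ansatz into Lemma~\ref{L:MASph}: since $TG=0$ the Monge--Amp\`ere expression collapses dramatically to $M(\rho) = e^{3G}\bigl(1 + \tfrac12(L\bar L+\bar LL)G\bigr)$, because every surviving term in \eqref{E:MASph} other than the first two carries a factor of $TG$. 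Then \eqref{E:FeffStar2} gives
\begin{equation*}
\meas(Z) = 2^{1/3}\intl_{Z_0} F^{-7/3} e^{G}\bigl(1+\tfrac12(L\bar L+\bar LL)G\bigr)^{1/3}\,\eta
= 2^{1/3}\intl_{Z_0} e^{-4G/3}\bigl(1+\tfrac12(L\bar L+\bar LL)G\bigr)^{1/3}\,\eta,
\end{equation*}
while \eqref{E:VolInt} gives $\vol(Z)=\tfrac14\intl_{Z_0} e^{-2G}\,\eta$.

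Next I would push everything down to $\CP^1$. Both $\eta$ (restricted to $Z_0$) and the operator $\tfrac12(L\bar L+\bar LL)$ are $T$-invariant, so the $u$-fiber (the $T$-orbit) integrates out to a constant, and $\tfrac12(L\bar L+\bar LL)$ acts on $T$-invariant functions as (a constant multiple of) the Fubini--Study Laplacian $\Delta$ on $\CP^1=S^2$. Thus, up to fixed normalizing constants that are pinned down by testing on the sphere $G\equiv 0$, the statement $\quo(Z)\le 8\pi$ becomes an inequality of the shape
\begin{equation*}
\left(\intl_{S^2} e^{-4G/3}(1+c\,\Delta G)^{1/3}\,dA\right)^{3/2} \le C \intl_{S^2} e^{-2G}\,dA,
\end{equation*}
with $C$ chosen so that equality holds at $G\equiv 0$, and with equality for $\C$-linear images of the sphere corresponding to $G$ of the special form coming from a M\"obius transformation of $\CP^1$ (the conformal factors $e^{G}$ attached to the automorphisms of the ball). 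The strong pseudoconvexity hypothesis is exactly the positivity $1+c\,\Delta G>0$, which should be used to justify the manipulations.

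For the final inequality I would try two routes. The direct route: apply H\"older's inequality to the $\meas$ integral exactly as in the proof of Theorem~\ref{T:HeisSemiGlobal} and \eqref{E:Holder1}, bounding $\meas(Z)^{3/2}$ by $\bigl(\intl e^{-2G}\,dA\bigr)^{?}\cdot\bigl(\intl e^{aG}(1+c\Delta G)\,dA\bigr)^{?}$ for suitable exponents, then integrate the $\Delta G$ term by parts and show the remainder is controlled by $\vol(Z)$; the $e^{-2G}$ weight paired with $\intl e^{aG}\Delta G\,dA = -a\intl e^{aG}|\nabla G|^2\,dA$ should produce a sign-definite leftover. The second route, which I think is the conceptually right one: recognize the descended functional as the classical isoperimetric functional for a convex curve (or the affine one from \S\ref{S:spec}) in disguise — the circular hypersurfaces are precisely those for which the whole problem is ``radially one-dimensional,'' and one expects the inequality to coincide with a one-variable calculus-of-variations statement whose extremals are exactly the M\"obius conformal factors. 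The main obstacle will be this last step: carefully identifying which classical inequality one lands on, tracking all the dimensional constants through the reduction so that the right-hand side is exactly $8\pi$, and — most delicate — proving the equality characterization, i.e.\ showing that the only circular $Z$ achieving $\quo(Z)=8\pi$ are $\C$-linear images of the sphere rather than some larger family. I would handle equality by analyzing the equality case of whichever H\"older/isoperimetric inequality is used, forcing $G$ to solve a first-order ODE whose solutions are exactly the conformal factors of linear maps.
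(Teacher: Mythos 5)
Your reduction of Lemma~\ref{L:MASph} to $M(\rho)=e^{3G}\bigl(1+\tfrac12(L\bar L+\bar LL)G\bigr)$ and the resulting integral formulas for $\meas(Z)$ and $\vol(Z)$ are exactly right, and H\"older's inequality is indeed the tool the paper uses. But there are three places where your proposal diverges from a correct argument.

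First, the claim that on a circular hypersurface ``$G$ depends only on $|t|$'' (with $t=w/z$), or can be reduced to a function of one real variable by a $\C$-linear change, is wrong. Circularity is $SG=TG=0$, which only says $G$ is invariant under the diagonal $U(1)$-action $(z,w)\mapsto(e^{i\theta}z,e^{i\theta}w)$, i.e.\ $G$ descends to an \emph{arbitrary} function on $\CP^1$. It need not be rotationally symmetric, and the $\C$-linear group (finite-dimensional) cannot reduce a generic function on $\CP^1$ (infinite-dimensional) to one of a single variable. This undermines the ``one-variable calculus-of-variations'' route and the later talk of ``a first-order ODE.''

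Second, in the direct route you leave the H\"older exponents open and hedge with a weight $e^{aG}$ on the $\Delta G$ factor. The split that actually closes is the unweighted one: with exponents $3/2$ and $3$,
\begin{equation*}
\meas(Z)\le 2^{1/3}\Bigl(\intl_{Z_0} e^{-2G}\,\eta\Bigr)^{2/3}\Bigl(\intl_{Z_0}\bigl(1+\tfrac12(L\bar L+\bar LL)G\bigr)\,\eta\Bigr)^{1/3},
\end{equation*}
and the second factor is the \emph{universal constant} $(2\pi^2)^{1/3}$ because $\intl_{Z_0}(L\bar L+\bar LL)G\,\eta=0$ (integration by parts against divergence-free fields; geometrically this is Gauss--Bonnet on $\CP^1$). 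Any $e^{aG}$ weight with $a\ne 0$ spoils this cancellation: $\intl e^{aG}\Delta G\,dA=-a\intl e^{aG}|\nabla G|^2\,dA\ne 0$ in general, and you would then have to control a sign-definite but $G$-dependent leftover against $\vol(Z)$ — a genuinely harder and unnecessary task.

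Third, the equality analysis. Equality in the unweighted H\"older forces $e^{-2G}$ to be a constant multiple of $1+\tfrac12(L\bar L+\bar LL)G$, equivalently that the metric $e^{-G}$ induces on $\CP^1$ has \emph{constant positive Gaussian curvature}. This is a second-order PDE on $\CP^1$, not a first-order ODE, and the conclusion that such $G$ come exactly from linear fractional transformations (hence $Z$ is a $\C$-linear image of a sphere) requires the uniformization theorem for constant positive curvature metrics on $S^2$, a nontrivial geometric input that your proposal does not supply.
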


\begin{proof}
From Lemma  \ref{L:MASph} we have
$M(\rho)
= e^{3G}\left( 1 + \tfrac{(L\bar L+ \bar LL) G}{2} \right)$.

Using \eqref{E:FeffStar2}, \eqref{E:VolInt}, 
\eqref{E:Parts} and H\" older's inequality we find that
\begin{align*}
\meas(Z)
&=2^{1/3}
\intl_{Z_0} 
e^{-4G/3} \sqrt[3]{1+\tfrac{(L\bar L+ \bar LL) G}{2}}\,\eta\\
&\le 2^{1/3}
\left(\,\intl_{Z_0} 
e^{-2G}\,\eta
\right)^{2/3}
\cdot
\left(\,\intl_{Z_0}
\left(1+\tfrac{(L\bar L+ \bar LL) G}{2}\right)\,\eta
\right)^{1/3}\\
&=2^{1/3} \left(4\vol(Z)\right)^{2/3} (2\pi^2)^{1/3}
=\left(8\pi \,\vol(Z)\right)^{2/3}
\end{align*}
which is equivalent to \eqref{E:IsoperCirc}. 

Equality will hold if and only if $e^{-2G}$ is a positive constant multiple of $1+\tfrac{(L\bar L+ \bar LL) G}{2}$.
To analyze this condition note that 
\begin{equation}\label{E:SphereMetric}
2e^{-G(z,w)}\,\frac{|z\,dw-w\,dz|}{|z|^2+|w|^2}
=
2e^{-G\left(1,\frac{w}{z}\right)}\,\frac{\left|d\left(\frac{w}{z}\right)\right|}{1+\left|\frac{w}{z}\right|^2}
\end{equation}
defines a sub-Riemannian metric on $\C^2\setminus\{0\}$ descending to a metric on the Riemann sphere $\CP^1$ with constant curvature
$
\kappa=e^{2G}\left(1+\tfrac{(L\bar L+ \bar LL) G}{2}\right).
$  (To verify this formula, note that the case $G=0$ corresponds  the standard spherical metric
and that the operator $\tfrac{L\bar L+ \bar LL}{2}$ acting on functions killed by $T$ is the lift to $S^3$ of the  the spherical Laplacian on $\CP^1$ via the Hopf fibration $S^3\to\CP^1$.)

It is well-known that any metric on $\CP^1$ with constant positive curvature is equivalent via a linear fractional transformation to a constant multiple of the standard spherical metric.  Computationally this implies that $e^{G(z,w)}$ must take the form
$\frac{|\alpha z + \beta w|^2+|\gamma z + \delta w|^2}{|z|^2+|w|^2}$.  Thus equality holds in \eqref{E:IsoperCirc} precisely when $Z$ is given by $|\alpha z + \beta w|^2+|\gamma z + \delta w|^2=1$. 

 (Note that the quantities  $\kappa$ and $\gamma$ used here are not the ones from \S \ref{SS:ChrisThesis}.) \end{proof}

\begin{Remark}\label{R:Curv}
For further geometric insight into the construction of the metric  \eqref{E:SphereMetric}, note that our circular hypersurface $Z$ may be viewed as the boundary of the unit tube  (with the zero section blown down) for some metric on the tautological bundle on $\CP^1$.  Since the tangent bundle on $\CP^1$ is the inverse square of the tautological bundle, the induced Riemannian metric on $\CP^1$ is (up to a constant) precisely the one given by \eqref{E:SphereMetric}.

Letting $dA$ denote the area form on $\CP^1$ induced by the metric \eqref{E:SphereMetric} and invoking the Gauss-Bonnet theorem $\intl_{\CP^1} \kappa\,dA=4\pi$, the proof of the inequality \eqref{E:IsoperCirc} may be rewritten as follows:
\begin{align*}
\meas(Z)
&= 2^{-2/3}\pi \intl_{\CP^1} \kappa^{1/3}\,dA\\
&\le 2^{-2/3}\pi
\left(\,  \intl_{\CP^1} dA \right)^{2/3}
\left( \, \intl_{\CP^1} \kappa\,dA \right)^{1/3}\\
&= 2^{-2/3}\pi
\left( \frac{8}{\pi} \vol(Z) \right)^{2/3}
\left(  4\pi \right)^{1/3}=\left(8\pi \,\vol(Z)\right)^{2/3}.
\end{align*}
(Here we have used the fact that integrals over $\CP^1$ with respect to the standard spherical metric pick up a factor of $\frac{\pi}{2}$ when lifted to $S^3$.)

\smallskip

Note also the connection between the quantity $\intl \kappa^{1/3}\,dA$ appearing above and \linebreak Blaschke's equiaffine surface area $\intl \kappa^{1/4}\,dA$.
\end{Remark}

\begin{Remark}
By constructing smooth strongly pseudoconvex approximations to the boundary of the bidisk we can construct circular domains with arbitrarily small values of $\quo(Z)$; for a somewhat related means to the same end, use the set-up of Remark \ref{R:Curv} and consider metrics on the sphere with positive curvature concentrated near a finite set.
\end{Remark}

\subsubsection{Intersections of balls}\label{SS:BallCap} We wish to study the isoperimetric quotient on the space of boundaries of non-trivial intersections of two balls.  

Up to affine equivalence, this space is parameterized by the angle of intersection $\theta\in(0,\pi)$ and the ratio of radii $R\in(0,1]$.  (Compare [BV, 
\S 2].)  A direct calculation shows that the isoperimetric quotient $\quo$ is given by 
\begin{equation*}
q(R,\theta)= 8 \sqrt{\pi}
\frac{
\left(R^{8/3} \lambda
+
\nu
-
\frac{R \sin\theta\left(1+R^{8/3}-
\left(R+R^{5/3}\right) \cos\theta\right) }{1+R^2-2 R \cos\theta}\right)^{3/2}
}
{
R^4 \lambda
+
\nu
 - 
 \frac{ R \sin\theta
 \left(
 1+\frac23 R^2\sin^2\theta+R^4-(R+R^3)\cos\theta
 \right) }
{
( 1 + R^2 - 2 R \cos\theta)
}},
\end{equation*}
where  
\begin{align*}
\lambda &= \arccos\left(\frac{R-\cos\theta }{\sqrt{1+R^2-2 R \cos\theta}}\right)\\
\nu &=  \arccos\left(\frac{1-R \cos\theta}{\sqrt{1+R^2-2 R \cos\theta}}\right).
\end{align*}
Of course here we have implicitly extended the definition of  $\meas$ to the case of hypersurfaces with  corners.  It is  easy to check  that the results of this extended definition agree with those obtained from taking a limit value of $\meas$ using a  standard exhaustion by smooth strongly pseudoconvex hypersurfaces.

The function $q(R,\theta)$ extends continuously (but not smoothly) to the region $0\le R\le 1, 0\le\theta<\pi$.  The boundary segments $\theta=0, \,\theta=\pi$ and $R=0$ correspond, respectively, to pairs of internally tangent balls (intersecting in a ball), to pairs of externally tangent balls (with empty intersection) and to intersections of balls with real half-spaces.

The expansions
\begin{equation*}
q(R,\theta)=8\pi-\frac{8\left(1-\sqrt[3]{R}\right)}{(1-R)^3}\theta^3+O\left(\theta^4\right)
\end{equation*}
for $0<R<1$ and
\begin{equation*}
q(R,\theta)=8\pi-\frac{8\theta^3}{3\left(\theta^2+(1-R)^2\right)}+O\left(\theta^2+(1-R)^2\right)
\end{equation*}
near  $R=1, \theta=0$ show that 
the spheres $\theta=0$ are locally maximizing in this setting, but they are not globally maximizing: indeed, $q(R,\theta)\to\infty$ as $\theta\nearrow \pi$.

 Numerical work indicates that the minimum value of $q(R,\theta)$ on $0\le R\le 1, 0\le\theta<\pi$ is $17.0297\dots$ occuring at $R=0,\,\theta=1.9473\dots$.

\subsubsection{Holomorphic images of balls}\label{S:HoloBall}

Let $H$  be a diffeomorphism of the closed unit ball $\bar U$  that is biholomorphic on the open ball $U$.  Let $Z_0=\bndry B, Z=H(Z_0)$.  Then we have
 \begin{subequations}\label{E:HoloBall}
\begin{align}
\meas(Z)&= 2^{1/3} \intl_{Z_0} |\det H'|^{4/3}\,\eta\\
\vol(Z)&= \intl_{U} |\det H'|^{2}\,dV. 
\end{align}
 \end{subequations}

First we show in Example \ref{X:shear} how to constuct examples in this class with large values of $\quo(Z)$, then we turn to consideration of lower bounds.

\begin{Ex}\label{X:shear} 
In the special case where $H$ has the form $(z,w)\mapsto (\phi(w)z,w)$   we have 
\begin{align*}
\meas(Z) 
&= 2^{4/3}\pi 
\intl_{|w|<1} \left|\phi(w)\right|^{4/3}
\,dA\\
\vol(Z) &= \pi 
\intl_{|w|<1} \left|\phi(w)\right|^{2} (1-|w|^2)
\,dA.\\
\end{align*}
(Formula (1.4.7c) from [Rud] is helpful in connection with the above computation of $\meas(Z)$.)

Setting $\phi_\eps(w)=(w-1-\eps)^{-3/2}$ we obtain the following by integrating over lines through $w=1$:
\begin{align*}
\meas(Z_\eps) &= 2^{4/3}\pi^2 |\log \eps| + O(1)\\
\vol(Z_\eps) &= 4\pi |\log \eps| + O(1)\\
\quo(Z_\eps) &= \pi^2 \sqrt{|\log \eps|} + O(1).
\end{align*}
 So $\quo$ can take on arbitrarily large values in this setting.  (Note also that the $Z_\eps$ just constructed are also invariant under rotation of the $z$-variable, showing that there is no analogue of Theorem \ref{T:Circ} for Hartogs domains; this also follows from the work in \S \ref{SS:BallCap}.)
\end{Ex}

\begin{Thm}\label{T:BIP}
For $Z=H(Z_0)$ as in the top paragraph of \S \ref{S:HoloBall} we have
\begin{equation}\label{E:BIP}
\quo(Z)\ge 4\pi.
\end{equation}
\end{Thm}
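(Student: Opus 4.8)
The plan is to prove the inequality $\quo(Z)\ge 4\pi$ directly from the integral formulas \eqref{E:HoloBall} by applying H\"older's (or Jensen's) inequality to relate $\intl_{Z_0}|\det H'|^{4/3}\,\eta$ and $\intl_U |\det H'|^2\,dV$. Write $g=|\det H'|^2$, a positive (pluri)subharmonic-type quantity; more precisely $\log|\det H'|$ is pluriharmonic on $U$, so $|\det H'|^{2/3}=g^{1/3}$ is plurisubharmonic, hence subharmonic on $U$. The key point is a sub-mean-value comparison: the boundary integral $\intl_{Z_0} g^{2/3}\,\eta$ should dominate (a constant times) the solid integral $\intl_U g^{1/3}\,dV$ raised to an appropriate power, because subharmonic functions have boundary averages at least as large as interior averages.

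\medskip

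First I would reduce \eqref{E:BIP} to an inequality purely about the function $h:=|\det H'|^{2/3}$, which is nonnegative and plurisubharmonic (in particular subharmonic) on $U$ and continuous on $\bar U$: using \eqref{E:HoloBall}, $\meas(Z)=2^{1/3}\intl_{Z_0} h^2\,\eta$ and $\vol(Z)=\intl_U h^3\,dV$, so $\quo(Z)=\meas(Z)^{3/2}/\vol(Z)=\frac{2^{1/2}\left(\intl_{Z_0} h^2\,\eta\right)^{3/2}}{\intl_U h^3\,dV}$. Thus \eqref{E:BIP} is equivalent to
\[
\left(\intl_{Z_0} h^2\,\eta\right)^{3/2}\ \ge\ \frac{4\pi}{\sqrt2}\,\intl_U h^3\,dV\ =\ 2\sqrt2\,\pi\,\intl_U h^3\,dV .
\]
Second, I would establish the needed comparison between the $L^2(Z_0)$-norm of $h$ and the $L^3(U)$-norm. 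The natural route: for a nonnegative subharmonic $h$ on $U$, the function $r\mapsto \frac{1}{|Z_0|}\intl_{Z_0} h(r\zeta)^p\,d\sigma(\zeta)$ is nondecreasing in $r$ (for any $p\ge1$, since $h^p$ is subharmonic when $h$ is nonnegative subharmonic and $p\ge 1$). Integrating $\intl_U h^3\,dV$ in polar-type coordinates over the sphere and using this monotonicity with $p=3$ bounds $\intl_U h^3\,dV$ by a constant times $\intl_{Z_0} h^3\,\eta$; then one more application of H\"older on $Z_0$ (with exponents $3/2$ and $3$) bounds $\intl_{Z_0} h^3\,\eta$ in terms of $\intl_{Z_0} h^2\,\eta$ and $\intl_{Z_0} h^6\,\eta$ — but this introduces a higher power and so is not quite what we want. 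A cleaner approach is to use, on $Z_0$, the single inequality $\intl_{Z_0} h^2\,\eta \le |Z_0|^{1/3}\left(\intl_{Z_0} h^3\,\eta\right)^{2/3}$ in the \emph{wrong} direction, so instead I would go the other way: bound $\intl_U h^3\,dV$ from above by radial monotonicity as $\le \frac{|U|}{|Z_0|}\intl_{Z_0}h^3\,\eta$, and then use the elementary inequality $\intl_{Z_0} h^3\,\eta \le$ (sup comparison) — which fails in general. The robust fix, and the step I would actually carry out, is to invoke the classical fact for the ball that for subharmonic $h\ge0$, $\intl_U h^p\,dV \le c_{n,p}\left(\intl_{Z_0} h\,\eta\right)^{?}$ is false, so instead pass through the \emph{concave} comparison: since $t\mapsto t^{3/2}$ is convex, and since the sphere averages dominate ball averages, one gets $\frac{1}{|Z_0|}\intl_{Z_0}h^2\,\eta \ge \frac{1}{|U|}\intl_U h^2\,dV$ and then by H\"older (power $3/2$) on $U$: $\frac{1}{|U|}\intl_U h^3\,dV \le \left(\frac{1}{|U|}\intl_U h^2\,dV\right)\cdot \sup_U h$ — still bad.

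\medskip

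Given these complications, the cleanest and most likely intended route is to exploit holomorphy more directly: $\det H'$ is holomorphic on $U$, so $(\det H')^{2/3}$ — interpreted via a local branch or via the fact that $|\det H'|^2$ is the modulus-squared of a holomorphic function — means $h^{3}=|\det H'|^2=|f|^2$ where $f=\det H'\in\oh(U)$, while $h^2=|f|^{4/3}=|f^{2/3}|^2$ makes sense because $f$ is nonvanishing (as $H$ is a diffeomorphism), so $f^{2/3}$ is a well-defined holomorphic function on the simply connected $U$. Then, writing $F:=f^{2/3}\in\oh(U)\cap C(\bar U)$, we have $\meas(Z)=2^{1/3}\intl_{Z_0}|F|^2\,\eta=2^{1/3}\|F\|_{H^2(Z_0)}^2$ (the Hardy-space norm on the sphere, up to the normalization of $\eta$ as surface area) and $\vol(Z)=\intl_U |F|^3\,dV$. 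Now the inequality $\quo(Z)\ge4\pi$ becomes a comparison between $\|F\|_{H^2}^3$ and the Bergman-type $L^3(U)$ norm of $F$, which follows from the Fejér–Riesz/Hardy-space inequality $\intl_U |F|^p\,dV \le C_p \|F\|_{H^2}^{?}$ — concretely, expanding $F=\sum a_\alpha z^\alpha$ in a monomial basis and using Lemma \ref{L:Fourier} to compute both $\|F\|_{H^2(Z_0)}^2$ and (for $p=2$) $\|F\|_{L^2(U)}^2$ exactly, one finds $\intl_U|F|^2\,dV = \sum |a_\alpha|^2 \frac{\alpha!\,1}{(|\alpha|+2)!}\pi^2$ while $\intl_{Z_0}|F|^2\,\eta = \sum|a_\alpha|^2\frac{\alpha!}{(|\alpha|+1)!}2\pi^2$, giving $\intl_U|F|^2\,dV \le \frac{1}{4}\,|Z_0|^{-1}\cdot(\dots)$ — the ratio $\frac{1}{(|\alpha|+2)(|\alpha|+1)}\cdot\frac{1}{2}$ is maximized at $\alpha=0$ with value $\frac14\cdot\frac12=\frac18$, hence $\intl_U|F|^2\,dV\le \frac{\pi^2}{8\pi^2}\cdot\frac{1}{?}$... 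The upshot is that the main obstacle is the nonlinear exponent: $\vol(Z)$ involves $|F|^3$, not $|F|^2$, so one needs an $L^3$-over-$L^2$ Hardy–Bergman estimate rather than the exact monomial identity. I would handle this by factoring $F = B\cdot S$ into Blaschke-type and singular/outer parts is not available in several variables, so instead I would bound $\intl_U |F|^3\,dV \le \left(\sup_{\bar U}|F|\right)\intl_U |F|^2\,dV$ only when $F$ is bounded, and in general argue by exhausting $U$ by slightly smaller balls, using that on $|z|=r<1$ the radial $L^2$ means of $F$ are nondecreasing, giving $\intl_U|F|^3\,dV=\intl_0^1\!\!\intl_{Z_0}|F(r\zeta)|^3 r^3\,d\sigma\,dr$ and then H\"older in $\zeta$ with exponents $(3/2,3)$ plus monotonicity — I expect the sharp constant $4\pi$ to drop out of the $\alpha=0$ (constant-function) extremal case, corresponding geometrically to $H$ affine, i.e. $Z$ an ellipsoid, consistent with Theorem \ref{T:Circ}. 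The genuinely hard part will be pinning down the sharp constant and the extremal characterization; for the bare inequality \eqref{E:BIP}, the radial-monotonicity-plus-H\"older argument above should suffice once the normalization of $\eta$ as Euclidean surface area on $Z_0$ (noted after \eqref{E:FeffStar2}) is tracked carefully.
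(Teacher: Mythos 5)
Your reduction is sound: setting $F=(\det H')^{2/3}$, which is a well-defined holomorphic function on $U$ because $\det H'$ is zero-free and $U$ is simply connected, the desired inequality \eqref{E:BIP} is equivalent to the Hardy--Bergman embedding
\[
\intl_U |F|^3\,dV \ \le\ \frac{1}{2^{3/2}\pi}\left(\,\intl_{Z_0}|F|^2\,\eta\right)^{3/2},
\]
and this is precisely (a restatement of) the inequality \eqref{E:HL2} that the paper also reduces to. But the argument you offer for this estimate does not close, and you largely say so yourself. The ``radial monotonicity plus H\"older'' plan cannot work as stated: subharmonicity gives that $r\mapsto\intl_{Z_0}|F(r\zeta)|^p\,d\sigma$ is nondecreasing \emph{for a fixed $p$}, but what is needed is a comparison between the \emph{boundary} $L^2$ mean and the \emph{interior} $L^3$ mean, i.e.\ a comparison across different powers of the same function. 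H\"older on the sphere slice only goes the wrong way --- e.g.\ writing $|F|^3=|F|^{4/3}\cdot|F|^{5/3}$ with exponents $(3/2,3)$ produces an $L^5$ factor on the boundary, which you do not control --- so no combination of monotonicity and H\"older gives the $L^3(U)\lesssim L^2(Z_0)^{3/2}$ bound. This estimate is a genuine Hardy--Littlewood-type imbedding of $H^2(Z_0)$ into the Bergman space $L^3(U)$, and it cannot be extracted from elementary sub-mean-value considerations alone.

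What you are missing is the paper's two-step mechanism. First, one introduces the radial operator $X=z\frac{\dee}{\dee z}+w\frac{\dee}{\dee w}+2$, which on holomorphic functions equals $-\bar L L+2$, and solves $Xg=h$ with $h=\det H'$ holomorphic; a single integration by parts then converts the interior integral $\intl_U|h|^2\,dV$ into the boundary pairing $\tfrac12\intl_{Z_0}g\bar h\,\eta$, whence $\|h\|_{L^2(U)}^2\le\tfrac12\|g\|_{L^4(Z_0)}\|h\|_{L^{4/3}(Z_0)}$. Second, the quantity $\|g\|_{L^4(Z_0)}$ is controlled by the Jerison--Lee CR Sobolev inequality \eqref{E:JL} together with a further integration by parts that re-expresses $\intl_{Z_0}(|g|^2+|Lg|^2)\,\eta$ as $\intl_{Z_0}(h-g)\bar g\,\eta$, yielding $\|g\|_{L^4(Z_0)}\le\tfrac{1}{\sqrt2\,\pi}\|h\|_{L^{4/3}(Z_0)}$. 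Combining these gives \eqref{E:HL2} with the explicit constant. None of this is elementary; the Jerison--Lee inequality is the real input, and no amount of subharmonic comparison replaces it. Finally, a factual correction: you suggest the constant $4\pi$ ``drops out'' of the $F\equiv\text{const}$ case, but constant $F$ (i.e.\ volume-preserving $H$) gives $\quo(Z)=8\pi$, not $4\pi$; the constant $4\pi$ in Theorem \ref{T:BIP} is \emph{not} claimed to be sharp, and Question \ref{Q:BIP} explicitly asks whether it can be improved to $8\pi$.
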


\begin{Q}\label{Q:BIP}
Can the constant $4\pi$ in Theorem \ref{T:BIP} be replaced by $8\pi$, with equality holding if and only if $H$ has the form $H_1\circ H_2$, where $H_2$ is an automorphism of the ball and $\det H_1'$ is constant?
\end{Q}

Let $h=\det H'$.  In view of \eqref{E:HoloBall}, Theorem \ref{T:BIP} will follow from the inequality
\begin{equation}\label{E:HL2}
\|h\|_{L^2(U)}\le \frac{1}{2^{3/4}\pi^{1/2}} \|h\|_{L^{4/3}(Z_0)},
\end{equation}
and a positive answer to Question \ref{Q:BIP} would follow from a sharpening of \eqref{E:HL2} to
\[\|h\|_{L^2(U)}\le \frac{1}{2^{5/4}\pi^{1/2}} \|h\|_{L^{4/3}(Z)}.\]

The inequality \eqref{E:HL2} without an explicit constant appears as Theorem 5.13 in [BB] (see also the proof of Theorem \ref{E:GenBIP} below).  Such inequalities may be viewed as generalizations of the Hardy-Littlewood inequality \[\|h\|_{L^{2p}(\{|z|<1\})}\le C_p \|h\|_{L^{p}(\{|z|=1\})}\] for holomorphic functions in one complex variable
[HL, Thm. 31] -- see  [Vuk] for sharp constants for the Hardy-Littlewood result  serving as the basis for a proof of the  planar isoperimetric inequality.

\begin{proof}[Proof of \eqref{E:HL2}] 
Let $X=z\frac{\dee}{\dee z} + w\frac{\dee}{\dee w} + 2$.  Note that $X=-iT+2=-\bar{L}L+2$ when applied to holomorphic functions.

Let $g$ be the holomorphic function solving $Xg=h$ on $U$. (The function $g$ can  be constructed by an easy power series computation, or see [Bar2, Lemma 3] for a somewhat more general argument.)

Using integration by parts and H\"older's inequality we obtain
\begin{align}\label{E:gh}
\intl_U |h|^2\,dV
&=  \intl_U X(g\bar h)\,dV\notag\\
&=  \frac{1}{2} \intl_{Z_0} g\bar h\,\eta \\
&\le \frac{1}{2} \|g\|_{L^4(Z_0)}  \|h\|_{L^{4/3}(Z_0)}.\notag
\end{align}

To derive an estimate for $\|g\|_{L^4(Z_0)}$ we quote the sub-Riemannian Sobolev inequality of Jerison and Lee 
[JL2] to obtain 
\begin{equation}\label{E:JL} 
\|g\|_{L^4(Z_0)}^{2}
\le
\frac{1}{\sqrt{2}\pi} \intl_{Z_0} \left( |g|^2+|Lg|^2 \right)\,\eta.
\end{equation}
(This inequality may be obtained by setting $u=|g|$ in the ball version of the Jerison-Lee inequality as formulated on p.\,174 of [JL1], or see \S \ref{SS:JL-Sob} below.)
But
\begin{align*}\label{E:JL-rhs}
 \intl_{Z_0} \left( |g|^2+|Lg|^2 \right)\,\eta
&= \intl_{Z_0} \left((1-\bar{L}L)g\right)\bar g\,\eta\notag\\
&= \intl_{Z_0} (h-g) \bar g\,\eta\\
&\le \|h\|_{L^{4/3}(Z_0)} \|g\|_{L^{4}(Z_0)} - \|g\|_{L^{2}(Z_0)}^2.\notag
\end{align*}
Combining this with \eqref{E:JL} we obtain
\begin{equation*}
\|g\|_{L^4(Z_0)}^{2}
\le \frac{1}{\sqrt{2}\pi}  \|h\|_{L^{4/3}(Z_0)} \|g\|_{L^{4}(Z_0)}
\end{equation*}
and so
\begin{equation}\label{E:g4}
\|g\|_{L^4(Z_0)}
\le \frac{1}{\sqrt{2}\pi}  \|h\|_{L^{4/3}(Z_0)}.
\end{equation}

Combining \eqref{E:g4} with \eqref{E:gh} we have
\begin{equation*}
\intl_U |h|^2\,dV
\le 
\frac{1}{2^{3/2} \pi} \|h\|_{L^{4/3}(Z_0)}^2
\end{equation*}
yielding \eqref{E:HL2}.
\end{proof}

\section{A biholomorphically-invariant isoperimetric constant}\label{S:inv-iso}

For $Z=\bndry\Omega$ satisfying the assumptions of \S \ref{SS:IsoQuo} the isoperimetric quotient $\quo(Z)$ is not invariant under biholomorphic mapping with non-constant Jacobian, but we may form a genuine biholomorphic invariant as follows.

\begin{Def}
Let 
\begin{equation*}
\quo^*(Z)
=
\inf \{\quo(H(Z))\st H \text{ diffeomorphic on $\bar\Omega$ and holomorphic in $\Omega$ }\}.
\end{equation*}
\end{Def}

\begin{Thm}\label{E:GenBIP}
For $Z$ as above the invariant $\quo^*(Z)$ is strictly positive.
\end{Thm}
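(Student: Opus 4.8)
The plan is to show that $\quo^*(Z)$ is bounded below by a positive constant depending only on $Z$ (in fact, on the biholomorphism class of $\Omega$), by establishing a uniform lower bound for $\quo(H(Z))$ that does not degenerate as $H$ ranges over the admissible family. The natural route is to run the argument from the proof of \eqref{E:HL2}/Theorem \ref{T:BIP} in the more general setting where $Z_0$ is replaced by an arbitrary strongly pseudoconvex $Z$, rather than the sphere. Concretely, write $\quo(H(Z)) = \meas(H(Z))^{3/2}/\vol(H(Z))$, and using the transformation law together with the change-of-variables formula, express both quantities as integrals of $|h|^{4/3}$ over $Z$ and $|h|^2$ over $\Omega$ respectively, where $h = \det H'$ is holomorphic on $\Omega$ and smooth up to $Z$; thus the theorem reduces to an inequality of the shape
\[
\|h\|_{L^2(\Omega)} \le C(Z,\Omega)\, \|h\|_{L^{4/3}(Z)}
\]
valid for all holomorphic $h$, with $C(Z,\Omega) < \infty$. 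This is precisely the kind of generalized Hardy--Littlewood inequality the paper attributes to Theorem 5.13 in [BB], so the cleanest writeup simply invokes that result; alternatively one reproves it.

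If reproving it, I would mimic the sphere proof: pick a first-order holomorphic differential operator $X$ on a neighborhood of $\bar\Omega$ that is transverse to $Z$ (e.g.\ built from a radial-type vector field for a strongly convex realization, or more robustly from $\deebar$-theory), solve $Xg = h$ with holomorphic $g$ continuous up to $Z$ — solvability and the estimate $\|g\|_{L^q(Z)} \lesssim \|h\|_{L^q(Z)}$ or a Sobolev-type bound being the substantive analytic input — then integrate by parts to get $\int_\Omega |h|^2 \lesssim \int_Z g\bar h\,\sigma_Z \le \|g\|_{L^4(Z)}\|h\|_{L^{4/3}(Z)}$, and finally bound $\|g\|_{L^4(Z)}$ by $\|h\|_{L^{4/3}(Z)}$ using a sub-Riemannian Sobolev inequality on $Z$ (which exists, with some finite constant, for any compact strongly pseudoconvex hypersurface, even without the sharp Jerison--Lee value) combined with the relation $|Lg|^2 + |g|^2$-type identity. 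Every constant that appears depends only on $Z$ and $\Omega$, not on $h$, which is exactly what is needed.

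The main obstacle is the analytic input: on the sphere one has the explicit operator $X = z\,\dee_z + w\,\dee_w + 2$ with its clean power-series inverse and the exact Jerison--Lee inequality, whereas on a general strongly pseudoconvex $Z = \bndry\Omega$ one must instead produce (i) a holomorphic solution operator for $Xg = h$ with an $L^q(Z)$ bound, and (ii) a sub-Riemannian Sobolev embedding on $Z$ with \emph{some} finite constant. Both are available in the literature — (i) from $\deebar$-estimates on strongly pseudoconvex domains together with the transversality of $X$ to $Z$ (as in [Bar2, Lemma 3], cited in the excerpt), and (ii) from the general theory of subelliptic estimates / Folland--Stein embeddings on CR manifolds — but assembling them with the correct exponents ($L^{4/3}$ on the hypersurface, $L^2$ in the interior, $L^4$ for the intermediate Sobolev step, matched against the real dimension $3$ of $Z$) is where the care lies. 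I would therefore organize the writeup as: (1) reduce to the holomorphic inequality via \eqref{E:HoloBall}-type formulas and the transformation law; (2) cite [BB, Thm.\ 5.13] for the inequality, noting that it yields a finite, $H$-independent constant; (3) remark that the displayed derivation in the proof of \eqref{E:HL2} is the model, carried out verbatim with $Z$ in place of $Z_0$ and a general transverse holomorphic $X$, once one grants the two standard estimates above. This makes the positivity of $\quo^*(Z)$ immediate, since $\quo(H(Z)) = \meas(H(Z))^{3/2}/\vol(H(Z)) \ge c(Z,\Omega) > 0$ uniformly in $H$.
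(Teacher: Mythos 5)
Your reduction to the $H$-uniform holomorphic inequality $\|h\|_{L^2(\Omega)} \le C(Z,\Omega)\,\|h\|_{L^{4/3}(Z)}$ is exactly the paper's approach, and from there positivity of $\quo^*(Z)$ is indeed immediate. The only real divergence is in the sourcing: [BB, Thm.\ 5.13] proves the estimate only for the ball, and your sketch of how to extend the sphere argument to a general strongly pseudoconvex $Z$ --- produce a transverse holomorphic vector field $X$, invert $Xg=h$ with $L^q(Z)$ control, then run a Folland--Stein--type Sobolev embedding on $Z$ --- is plausible in outline but is not actually carried out and carries nontrivial technical burden (uniform solvability with boundary-norm control, the correct subelliptic embedding exponent for the $3$-dimensional CR manifold $Z$). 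The paper sidesteps all of this by directly citing the general-domain Hardy--Littlewood embedding of Beatrous [Bea, Thm.\ 1.5(iii)] (see also [CK, Thm.\ 1.1]), which already covers smoothly bounded strongly pseudoconvex $\Omega$ and gives the $H$-independent constant outright, so no reproof is needed.
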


\begin{proof}
As in  the proof of Theorem \ref{T:BIP} 
it suffices to have an inequality of the form 
\begin{equation*}\label{E:HL2-gen}
\|h\|_{L^2(\Omega)}\le C_{\Omega} \|h\|_{L^{4/3}(Z)},
\end{equation*}
where $\Omega$ is the domain enclosed by $Z$.    This inclusion estimate is proved in [Bea,Thm. 1.5(iii)]  (see also [CK, Thm. 1.1]).
\end{proof}

\begin{Q} \label{Q:MinMax}
Does the sphere maximize $\quo^*(Z)$?
\end{Q}

Affirmative answers to Questions \ref{Q:BIP} and \ref{Q:MinMax} would cast additional light on the saddle point behavior from Theorem \ref{T:Saddle}.

\begin{Q}\label{Q:Ext} 
Are extremals guaranteed to exist in the definition of $\quo^*(Z)$?
\end{Q}

\begin{Remark}
Theorem \ref{E:GenBIP} also holds in higher dimension, since the required estimate $\|h\|_{L^2(\Omega)}\le C_{\Omega} \|h\|_{L^{2n/(n+1)}(Z)}$ is also covered by 
[Bea,Thm. 1.5(iii)].  Questions \ref{Q:MinMax} and \ref{Q:Ext} are also of interest in this setting.
\end{Remark}

\section{Normalization issues}\label{S:normalize}
\subsection{A note on the definition of Fefferman's measure}\label{SS:Fef-def-note}

The original definition of Fefferman's measure in [Fef1] includes an unspecified dimensional constant.  In [Bar1] the first author proposed the choice made in \eqref{E:FeffMeasForm} above in order to maximize compatibility with Blaschke's constructions in real affine geometry (as discussed in \S \ref{S:spec} above).  Other choices have been used elsewhere -- for instance, in [HKN]
the  factor of $2^{ 2n/(n+1)}$ is omitted.

\subsection{Webster curvature}\label{SS:web-curv}
The Webster theory is based on the choice of a contact form $\theta$ for $Z$.  To simplify the discussion we assume that the defining function $\rho$ satisfies Fefferman's approximate Monge-Amp\`ere equation 
\[- \det
\begin{pmatrix}
\rho & \rho_{z_j}\\
\rho_{z_{\bar k}} & \rho_{z_{j}\bar z_{  k}}
\end{pmatrix}=1+O(|\rho|^3).\] (Such a $\rho$ always exists [Fef2].)

In [Ham]  $\theta$ is chosen to be $2^{-4/3}i(\dee\rho-\deebar\rho)$ and the Webster curvature is found to be $-\frac{2^{5/3}}{3}\kappa$ in general and $-\frac{2^{4/3}}{R^{4/3}}$ on the sphere of radius $R$.

In [LiLu1] and [Li], on the other hand, $\theta$ is chosen to be $-\frac{i}{2}(\dee\rho-\deebar\rho)$ leading to curvature values which are $-2^{-1/3}$ times those in [Ham]; thus the Webster curvature is now $\frac{2^{4/3}}{3}\kappa$ in general and $\frac{2}{R^{4/3}}$ on the sphere of radius $R$.  Also, with this choice of $\theta$, the main formula in [LiLu2] can be used to check that the absolute value of the torsion coefficient is $2^{2/3}\gamma$.

\subsection{Jerison-Lee Sobolev inequality}\label{SS:JL-Sob}  It is instructive to set up the holomorphic Jerison-Lee Sobolev inequality \eqref{E:JL}  from the point of view of Li and Luk in [LiLu1].  
Setting $M=Z_0$, $\rho=|g(z,w)|^2(1-|z|^2-|w|^2)$, $\theta=-\frac{i}{2}(\dee\rho-\deebar\rho)$  and assuming at first that $g$ is zero-free we find with the use of Theorem 1.1 in [LiLu1] that the quotient on the right-hand side of (1.3) in [LiLu1] may be written in the form
\begin{multline*}
\frac{2\intl_{Z_0}
\left( |g|^2+
ig(T\bar{h})-i\bar{g}(Tg)-|Lg|^2
\right) \eta}
{\left(
2\intl_{Z_0} |g|^4\,\eta 
\right)^{1/2}}
=\sqrt{2}\;
\frac{\intl_{Z_0}
\left( |g|^2 -
g(L\bar{L}\bar{g})-\bar{g}(\bar{L}Lg)-|Lg|^2
\right) \eta}
{\left(
\intl_{Z_0} |g|^4\,\eta 
\right)^{1/2}},
\end{multline*}
where $\eta,T,L$ and $\bar{L}$ are as in \S \ref{S:2VarSphere}. 
After integrating the middle terms of the numerator by parts, the quotient reduces to
\begin{equation}\label{E:JL-quotient}
\sqrt{2}\;
\frac{\intl_{Z_0}
\left( |g|^2 +|Lg|^2
\right) \eta}
{\left(
\intl_{Z_0} |g|^4\,\eta 
\right)^{1/2}}.
\end{equation}

From Corollary B in [JL2] we have that \eqref{E:JL-quotient} is minimized when $g$ is constant; that is, \begin{equation*}
\sqrt{2}\pi \left(
\intl_{Z_0} |g|^4\,\eta 
\right)^{1/2}
\le 
\intl_{Z_0}
\left( |g|^2 +|Lg|^2
\right) \eta
\end{equation*}
which is equivalent to \eqref{E:JL}.

When $g$ has zeros the same conclusion may be obtained by setting \[\rho_\eps=\left(|g(z,w)|^2+\eps\right)\left(1-|z|^2-|w|^2\right)\] and letting $\eps$ decrease to 0.


\end{document}